\def\namedlabel#1#2{\begingroup
    #2%
    \def\@currentlabel{#2}%
    \phantomsection\label{#1}\endgroup
}
\newtheorem{thm}{Theorem}[section]
\newtheorem{cor}[thm]{Corollary}
\newtheorem{lem}[thm]{Lemma}
\newtheorem{prop}[thm]{Proposition}
\newtheorem{proposition}[thm]{Proposition}
\newtheorem{corollary}[thm]{Corollary}
\newtheorem{lemma}[thm]{Lemma}
\theoremstyle{definition}\newtheorem{definition}[thm]{Definition}
\theoremstyle{definition}\newtheorem{remark}[thm]{Remark}
\theoremstyle{definition}\newtheorem{example}[thm]{Example}
\theoremstyle{definition}
\theoremstyle{definition}
\def\H{{\mathcal{H}}}
\def\C{{\mathbb{C}}}
\def\R{{\mathbb{R}}}
\def\F{{\mathbb{F}}}
\def\diag{{\rm diag}\,}
\def\cA{{\mathcal A}}
\def\IR{{\mathbb{R}}}
\def\cS{{\cal S}}
\def\tr{{\rm tr}}
\newcommand{\fraka}{\mathfrak{a}}
\newcommand{\frakb}{\mathfrak{b}}
\newcommand{\frakc}{\mathfrak{c}}
\renewcommand\le{\leqslant}
\renewcommand\ge{\geqslant}
\begin{document}
\openup .7 \jot
\title{Noncommutative Distances on Graphs: An Explicit Approach via Birkhoff-James Orthogonality}
\author{Pierre Clare \and Chi-Kwong Li \and Edward Poon \and Eric Swartz}
\date{}
\maketitle


\begin{abstract} We study the problem of calculating noncommutative distances on graphs, using techniques from linear algebra, specifically, Birkhoff-James orthogonality.  A complete characterization of the solutions is obtained in the case when the underlying graph is a path.
\end{abstract}

\section{Introduction}

The notion of \emph{noncommutative distance} emerged naturally from the reformulation of the distance in Riemannian geometry in terms of Dirac operators \cite[Chap. 6]{NCG}. More precisely, if $(\mathcal{M},g)$ is a spin Riemannian manifold, the geodesic distance $\delta_g(p,q)$ between two points on $\mathcal{M}$, defined as the infimum of the length of paths from $p$ to $q$, can be expressed as:

\begin{equation}\label{eq-dist-Riemannian}
\delta_g(p,q)= \sup\left\lbrace \left|f(p)-f(q)\right|\::\:f\in \cA\:,\:\big\|[\partial_M,f]\big\|\le1\right\rbrace
\end{equation}
where $\partial_M$ is the Dirac operator associated with the spin structure on $\mathcal{M}$, acting as an unbounded self-adjoint operator on the Hilbert space $\mathfrak{H}$ of $L^2$-spinors on $\mathcal{M}$ and $\cA$ is the abelian von Neumann algebra of bounded measurable functions on $\mathcal{M}$, acting as multiplication operators on $\mathfrak{H}$.

This approach readily generalizes to noncommutative spaces.  Indeed, $(\cA,\mathfrak{H},\partial_M)$ is an example of a spectral triple in the sense of Connes and \eqref{eq-dist-Riemannian} allows one to define a noncommutative distance between pure states of a C*-algebra $A$ represented on a Hilbert space $\H$ equipped with an operator $D$ satisfying certain conditions (see \cite[Chap. 6]{NCG}) by: \begin{equation}\label{eq-dist-spectriple}
d^D(\varphi,\psi)= \sup\left\lbrace \left|\varphi(a)-\psi(a)\right|\::\:a\in A\:,\:\big\|[D,a]\big\|\le1\right\rbrace.\end{equation}

The case where $A$ is of the form $C_0^\infty(\mathcal{M})\otimes\mathfrak{K}(\H)$, where $\mathfrak{K}(\H)$ denotes the algebra of compact operators on $\H$ (possibly finite-dimensional), is of particular relevance for applications to physics, including quantum gravity (see \cite{Survey_Martinetti} for a comprehensive survey of the history of the notion as well as recent developments). The so-called \emph{almost commutative} case, where $A=C_0^\infty\otimes M_n(\C)$ already has meaningful applications, notably to the metric interpretation of the Higgs field \cite{Spectral_action, Gravity-matter-NCG} in the standard model of elementary particles.

Motivated by these applications to physics, general noncommutative distances have been studied on various discrete and finite spaces. In that context, the operator $D$ can be understood as the adjacency matrix corresponding to a graph structure, opening the door to combinatorial arguments. In the finite case, the noncommutative distance between two vertices $i$ and $j$ of the graph associated with the operator $D$ is given by the formula stated in Definition \ref{def-NC-dist}:

\begin{equation}\label{eq-NCD_intro}
d^D(i,j)=\sup\left\lbrace \left|a_i-a_j\right|\::\:A = \operatorname{diag}(a_1,\ldots,a_n)\:,\:\big\|[A,D]\big\|\le1\right\rbrace.
\end{equation}

A systematic approach to noncommutative distances on spaces with $n$ points was adopted in \cite{NC_distances}, where general formulas were obtained for $n\leq3$ and proven not to exist for $n=4$. Nevertheless, one may be able to obtain results in some cases by exploiting the properties of the underlying graphs, and study the behavior of the distance under certain modifications of the Dirac operator. For instance, the methods developed in \cite{Besnard} rely on certain graph manipulations to provide estimates for noncommutative distances and compare them to standard (weighted) geodesic distances. Despite these advances, general results remain elusive; an example of a natural question that remains open at present is that of deciding whether the noncommutative distance between two vertices in a graph increases upon removing an edge.

The purpose of this paper is to bring techniques from modern matrix theory to bear on direct calculations of noncommutative distances on certain graphs. More precisely, we use a characterization of the notion of \emph{Birkhoff-James orthogonality}, due to H. Schneider and the second-named author \cite{LS}, to reformulate the problem of calculating \eqref{eq-NCD_intro} in the case of path graphs. This method allows us to recover known results for graphs with uniform weights \cite{lattice_94_BLS,lattice_96_Atzmon} as well as explicit formulas obtained for $n\leq4$ in \cite{NC_distances}, and to provide a reduction scheme for the general problem. On the other hand, the complexity of the solution in the case of path graphs provides strong evidence of the intractability of the general problem.

The paper is organized as follows.  In Section \ref{sec-reformulation-linalg-tech}, we introduce notation, discuss the problem of noncommutative distance on graphs, make some simplifying observations, and introduce our main tool: Birkhoff-James orthogonality (Theorem \ref{T:BJ}).  In Section \ref{sect:path}, we completely characterize the solution to the noncommutative distance problem for path graphs of length $n$. The main tool is the notion of a \emph{viable vector}, introduced in Definition \ref{D:viable}, which leads to a convenient reformulation of the problem (Theorem \ref{thm:maxatviabledecomp}). Finally, in Section \ref{sect:analysis}, we apply our results to obtain explicit formulas for small values of $n$, while also noting that the number of different formulas one must use for each $n$ is related to the number of partitions of $n$ into a sum of integers all at least $2$, which gives quantitative insight into the difficulty of solving the noncommutative distance problem in full generality.

\section{Reformulation of the Problem and Preliminaries}\label{sec-reformulation-linalg-tech}

Let us fix notation and collect various technical facts that will be of use in the remainder of the paper.

\subsection{Notation and Terminology}

Throughout the paper, we make use of standard notation from matrix theory: $M_{m,n}(\F)$ denotes the space of matrices of $m$ rows and $n$ columns with entries in the field $\F$, and $M_{n,n}(\F)$ is also denoted by $M_n(\F)$ or simply $M_n$. Vectors will be treated as matrices by means of the canonical isomorphism $M_{n,1}(\F)\simeq\F^n$ and for any vector $x\in\F^n$, we will denote by $x^*$ the row vector obtained by transposing and conjugating the column vector associated with $x$ under this isomorphism.

We will denote by $E_{i,j}$ the matrix (of any size, generally made clear from the context) with $1$ as its $(i,j)$ entry and $0$ elsewhere. The symbol $\oplus$ will be used to denote direct sums of vector spaces as well as direct sums of linear maps, generally thought of as diagonal entries in block matrices.  Finally, given a matrix $M \in M_n(\C)$, we denote by $\rho(M)$ the spectral radius of $M$.

\subsection{Noncommutative Distances on Graphs}

Let $G$ be a finite simple graph with set of vertices $V=\{1,\ldots,n\}$ and set of edges $E$. A spectral triple $(\cA,\H,D)$ can be associated with $G$ as follows. Consider the C*-algebra $\cA$ of diagonal matrices in $M_n(\C)$ acting on the Hilbert space $\H=\C^V\simeq\C^n$.  The analogue of a Dirac operator in this context is simply a self-adjoint operator in  $\operatorname{End}(\H)\simeq M_n(\C)$ that is compatible with the graph structure, \textit{i.e.}  a Hermitian matrix $D=(d_{i,j})_{1\le i,j\le n}$ whose entries satisfy:

\begin{equation}\label{eq-Dirac-op}
i\neq j\:\text{and}\:\{i,j\}\notin E\quad\Rightarrow\quad d_{i,j}=0.
\end{equation}

It is worth noting that the condition expressed in \eqref{eq-Dirac-op} encodes the compatibility of the operator $D$ with the edge structure of $G$ interpreted as a discrete differential structure on $V$ (see \cite{Discrete_diff}).

\begin{remark} The matrix entries $d_{i,j}$ of the Dirac operator $D$ will generally be assumed to be nonnegative.  Condition \eqref{eq-Dirac-op} allows for $D$ to have nonzero coefficients on the diagonal of $D$, even in the absence of loops in the graph $G$.
\end{remark}

Given such a triple, the general procedure of \eqref{eq-dist-spectriple} allows one to define a \emph{noncommutative distance} between vertices of the graph, as follows.

\begin{definition}\label{def-NC-dist}
Let $G$ be a finite simple graph with set of vertices $V=\{1,\ldots,n\}$. The noncommutative distance on $G$ associated with a Dirac operator $D$ as above is given by: \begin{equation}\label{eq-def-NC-dist}
d^D(i,j)=\sup\left\lbrace \left|a_i-a_j\right|\::\:A = \operatorname{diag}(a_1,\ldots,a_n)\:,\:\big\|[A,D]\big\|\le1\right\rbrace
\end{equation} for any $i,j$ in $V$.
\end{definition}

It will also be convenient to consider the (possibly infinite) \emph{weights} associated with $D$ and defined by: \[w_{i,j}=|d_{i,j}|^{-1}.\]  These weights can be used to define a geodesic distance $\ell^D$ on the graph by considering the infimum of weighted lengths of paths connecting two given vertices.  Both the geodesic and the noncommutative distances associated with $D$ are generalized distances in the sense that they can assume infinite values. However, it is proved in \cite[Proposition 4]{NC_distances} that the noncommutative distance $d^D(i,j)$ is finite if and only if the vertices $i$ and $j$ are connected in $G$. Another result of \cite{NC_distances} shows that the noncommutative distance between two vertices is always dominated by the geodesic distance, that is, \[d^D(i,j)\le\ell^D(i,j)\] for all $i$, $j$. A survey of general properties can be found in \cite{Besnard}, along with further estimates and comparisons between $d^D$ and $\ell^D$ obtained in relation to structural properties of the graph.

\begin{remark}The natural question of deciding whether the noncommutative distance between two vertices of $G$ increases upon removing an edge currently appears to remain open.
\end{remark}

\subsection{First Reductions}\label{sec-first-reduc}

While the general definition of $d^D(i,j)$ given in \eqref{eq-def-NC-dist} shows the geometric nature of the question of calculating explicitly the noncommutative distance between vertices in a given graph, the problem can be reformulated in somewhat simpler terms. First, a standard C*-algebraic argument (see \cite[Lemma 1]{NC_distances}) allows one to consider only elements $A$ that are positive in $M_n(\C)$ and an equality condition on the norm of commutators $[D,A]$ in \eqref{eq-def-NC-dist}. Therefore, restricting our attention to connected graphs, the distance can be expressed as:
\begin{align*}
 d^D(i,j) &=\max\big\{ \left|a_i-a_j\right|\::\:A = \operatorname{diag}(a_1,\ldots,a_n)\:,\:\big\|[A,D]\big\|=1\:,\:a_1,\ldots,a_n\ge0\big\} \\
          &= \max\big\{ a_i-a_j \::\:A = \operatorname{diag}(a_1,\ldots,a_n)\:,\:\big\|[A,D]\big\|=1\:,\:a_1,\ldots,a_n \in \IR \big\},
\end{align*}
where the second formulation holds because both $\|[A,D]\|$ and $|a_i - a_j|$ are invariant under replacing $A$ by $-A$ or by $A + rI$.

Next, let us notice that if $A = \diag(a_1, \dots, a_n)$ and $D$ satisfies the graph 
compatibility condition \eqref{eq-Dirac-op}, then the commutator $[A,D]$ has $(i,j)$-entry 
$(a_i-a_j) d_{ij}$, and hence is a skew-symmetric matrix. We have the following
canonical form result, e.g., see \cite[Theorem 5]{KL} for 
a simple proof.

\begin{proposition} \label{skew}
Let $T \in M_n$ be a real skew-symmetric matrix. There is an orthogonal 
matrix $Q$ such that $Q^tTQ$ is a direct sum of
$0_{n-2r}$ (the zero matrix in $M_{n-2r}(\R)$) and $2\times 2$
matrices 
$$\begin{pmatrix} 0 & s_j \cr -s_j & 0 \cr\end{pmatrix}, \quad j = 1, \dots, r,$$ 
where $s_1 \ge \cdots \ge s_r > 0$. Consequently,
$T$ has nonzero complex eigenvalues $\pm is_j$ for $j = 1, \dots, r$ and 
$$s_1 = \|T\| = \max\{ x^t T y: x, y \in \IR^n, x^tx = y^t y = 1, x^ty = 0\}.$$
\end{proposition}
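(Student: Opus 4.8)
The plan is to establish the canonical form first, then read off the eigenvalue and norm statements as consequences. For the canonical form, I would proceed by induction on $n$. Since $T$ is real skew-symmetric, $iT$ is Hermitian, so $T$ is a normal matrix; its eigenvalues are purely imaginary and occur in conjugate pairs $\pm is_j$ because the characteristic polynomial has real coefficients. Pick an eigenvalue $is_1$ with $s_1 = \rho(T) = \|T\|$ maximal (the equality $\rho(T)=\|T\|$ holds since $T$ is normal), and a corresponding unit eigenvector $z = x + iy \in \C^n$ with $x,y \in \R^n$. Separating real and imaginary parts of $Tz = is_1 z$ gives $Tx = -s_1 y$ and $Ty = s_1 x$. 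A short computation using skew-symmetry ($x^t T x = 0 = y^t T y$) shows $x^t y = 0$ and $\|x\| = \|y\|$; after rescaling we may take $x^t x = y^t y = 1$. Then the real two-dimensional subspace $W = \operatorname{span}_\R\{x,y\}$ is $T$-invariant, and on the ordered orthonormal basis $(x,y)$ the matrix of $T|_W$ is $\begin{pmatrix} 0 & s_1 \\ -s_1 & 0 \end{pmatrix}$. Because $T$ is skew-symmetric, $W^\perp$ is also $T$-invariant and $T|_{W^\perp}$ is again real skew-symmetric; applying the inductive hypothesis to it and assembling the orthonormal bases into the columns of $Q$ yields the stated direct sum form, with the $2\times 2$ blocks ordered so that $s_1 \ge \cdots \ge s_r > 0$.

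For the consequences: the eigenvalues of a direct sum are the union of the eigenvalues of the summands, so $T$ has eigenvalues $0$ (with multiplicity $n-2r$) and $\pm i s_j$, $j = 1,\dots,r$. Since $Q$ is orthogonal, $\|T\| = \|Q^t T Q\|$, and the operator norm of the block-diagonal matrix is the largest of the norms of its blocks, namely $\max_j s_j = s_1$ (the $2\times 2$ block has singular values $s_j, s_j$). For the final variational formula, note $\|T\| = \max\{ |x^t T y| : \|x\| = \|y\| = 1\}$ is the standard characterization of the operator norm of a real matrix acting between Euclidean spaces (restricting to real vectors is harmless since $T$ is real). It remains to argue that the additional constraint $x^t y = 0$ does not decrease the maximum: the pair $(x,y)$ arising from the top eigenvector as above already satisfies $x^t y = 0$ and attains the value $x^t T y = -y^t(-s_1 y)/\text{(sign)} = s_1$ up to replacing $y$ by $-y$, so the constrained maximum is at least $s_1 = \|T\|$, while it is trivially at most $\|T\|$.

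The main obstacle is the orthogonality-and-norm bookkeeping for the constrained variational formula: one must check carefully that an optimal pair $(x,y)$ for the unconstrained problem can always be chosen orthogonal (equivalently, that the eigenvector construction really does furnish a pair achieving the norm with $x \perp y$), rather than merely that such a pair exists abstractly. Everything else — the inductive reduction, the invariance of $W^\perp$, the eigenvalue count — is routine linear algebra, and indeed the cited reference \cite[Theorem 5]{KL} supplies a self-contained argument, so I would keep this part brief and refer there for the details of the canonical form while spelling out only the deduction of the norm formula.
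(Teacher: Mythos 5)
Your argument is correct. Note, however, that the paper does not actually prove Proposition \ref{skew}: it simply cites \cite[Theorem 5]{KL} for the canonical form and treats the eigenvalue and norm statements as immediate consequences, so there is no ``paper proof'' to match step for step. Your self-contained route --- induction on $n$, extracting from a top eigenvector $z=x+iy$ (with $s_1=\rho(T)=\|T\|$ by normality) the relations $Tx=-s_1y$, $Ty=s_1x$, checking $x^ty=0$ and $\|x\|=\|y\|$ via $x^tTx=y^tTy=0$ and $x^tTy=s_1\|x\|^2=s_1\|y\|^2$, and then splitting off the $T$-invariant plane $W=\operatorname{span}\{x,y\}$ and its invariant complement --- is the standard proof and is sound; the only implicit hypotheses worth flagging are the trivial case $T=0$ (where $r=0$ and there is nothing to do) and the fact that $s_1>0$ is needed to deduce $x^ty=0$ and that $x,y\neq0$, both of which hold once $T\neq0$. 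Your handling of the constrained variational formula is also right, and in fact cleaner than you suggest: with your normalization the pair $(x,y)$ satisfies $x^ty=0$ and $x^tTy=s_1$ directly (no sign adjustment is needed), which together with the trivial bound $x^tTy\le\|T\|=s_1$ settles the last displayed identity. So what your write-up buys, compared with the paper, is a short self-contained verification in place of an external citation; deferring the canonical form to \cite{KL} and spelling out only the norm formula, as you propose, would also be consistent with how the paper uses the result.
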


By the above proposition, it is clear that
$A = \diag(a_1, \dots, a_n)$ satisfies $\|AD-DA\| \le 1$
if and only if 
\[\tr[A(Dxy^t - xy^tD)] = y^t(AD-DA)x \le 1 \] 
for any orthonormal 
pair of vectors $x,y \in \R^n$.  It follows that:
\begin{equation}\label{eq-NC-dist_trace}
d^D(i,j) = \max\big\{ (a_i - a_j): A  = \diag(a_1, \dots, a_n), \tr(AR)\le 1 \hbox{
for all } R  \in \cS(D)\big\}\end{equation}
where
$$\cS(D)= \{
\{Dxy^t-xy^tD:  x, y \in \IR^n, \|x\| = \|y\| = 1,
x^ty = 0\}.$$

\subsection{Birkhoff-James Orthogonality}\label{sec-BJ}

Orthogonality is a useful tool in the study of Euclidean spaces and the geometric objects modeled after them. However, not all metric spaces of interest come naturally equipped with such a notion. In order to extend techniques that involve perpendicularity (\textit{e.g.} the optimality of orthogonal projections) one may use characterizations expressed in terms of the metric only. One such characterization, introduced by Birkhoff \cite{Birkhoff35} and studied by James \cite{James45,James47}, takes as its starting point the elementary observation that two lines in Euclidean space are perpendicular if and only if no point on the second is closer to a marked point on the first than their intersection.  

\begin{figure}
\begin{center}
\begin{tikzpicture}[scale=1]
\draw[->] (-1.5,1)--(5,1);
\draw[->] (0,-1.2)--(0,5.2);
\draw[thick,dashed] (-0.6,-0.2)--(2,5);
\draw[thick,dotted] (0,1)--(3,2);
\draw (1.5,-1)--(4.2,4.4);
\draw (-2,2)--(3,-0.5);
\draw[very thick,->] (0,1)--(2,0);
\draw[very thick,->] (0,1)--(0.5,2);
\draw[very thick,->] (2,0)--(3,2);
\filldraw (2,0)circle(1.5pt);
\filldraw (3,2)circle(1.5pt);
\draw (0.25,2.3) node {$u$};
\draw (0.8,0.15) node {$v$};
\draw (3.6,1.5) node {$v+tu$};
\draw (-1.4,2.2) node {$\mathcal{L}_1$};
\draw (4.5,3.7) node {$\mathcal{L}_2$};
\end{tikzpicture}
\caption{$\mathcal{L}_1\perp\mathcal{L}_2$ in $\R^2$ Euclidean if and only if $\|v\|\leq \|v+ tu\|$ for all $t$.}\label{fig-BJ}
\end{center}
\end{figure}
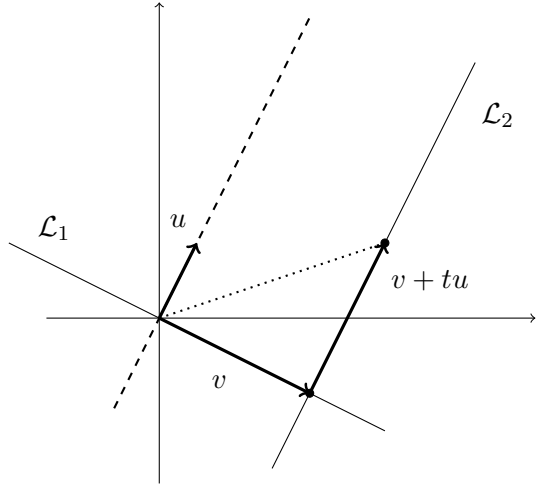

If one denotes the marked point by the origin, the intersection of the lines by $v$, and a direction vector of the second line by $u$ (see Figure \ref{fig-BJ}), then this orthogonality condition can be reformulated as: 
\begin{equation}\label{eq-BJ_ortho}\|v\|\leq \|v+ tu\|\quad\text{for any $t\in\R$}.\end{equation}

An overview of Birkhoff-James orthogonality and its applications in various settings can be found in the survey \cite{BJ_survey}. Its relevance to the study of the problem at hand comes down to the way the definition naturally encapsulates the notion of best approximation to a subspace.

If we focus on a matrix space over ${\mathbb F} = \R$ or $\C$, we say that
a matrix $A \in M_{m,n}(\F)$ is Birkhoff-James orthogonal to a matrix $B \in M_{m,n}(\F)$ 
if 
\begin{equation}\label{E:BJ}
\|A\|\leq \|A+ \gamma B\|\quad\text{for any $\gamma\in\F$}.
\end{equation}
The relation thus obtained (which, we should note, is generally not symmetric) has been studied by Singer \cite{Singer_Best_approx}, as well as by the second-named author and H. Schneider \cite{LS}. The results established in \cite[Theorem 3.1]{LS} characterize Birkhoff-James orthogonality and provide a useful tool for this paper, which can be restated as follows: 

\begin{thm}\label{T:BJ}
Let $A, B \in M_{m,n}(\F)$.  Then $A$ is Birkhoff-James orthogonal to $B$ if and only if there exist unit vectors $x \in \F^m$ and $y \in \F^n$ such that $\|A\| = x^* A y$ and $x^* B y = 0$.
\end{thm}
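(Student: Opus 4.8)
\emph{Reverse implication.} If unit vectors $x\in\F^m$ and $y\in\F^n$ satisfy $\|A\|=x^*Ay$ and $x^*By=0$, then for every $\gamma\in\F$ we have $\|A+\gamma B\|\ge |x^*(A+\gamma B)y|=|x^*Ay+\gamma\,x^*By|=|x^*Ay|=\|A\|$, so $A$ is Birkhoff--James orthogonal to $B$.

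\emph{Forward implication.} Assume $A\neq 0$ (the case $A=0$ being degenerate) and set $\sigma=\|A\|>0$. I would begin by recording the structure of the norm-realizing pairs: by Cauchy--Schwarz a unit pair $(x,y)$ satisfies $x^*Ay=\sigma$ exactly when $\|Ay\|=\sigma$ and $x=Ay/\sigma$, i.e.\ when $x$ lies in the top left singular subspace $U:=\ker(\sigma^2I-AA^*)$ and $y=A^*x/\sigma$; conversely every unit $x\in U$ produces such a pair. The key observation is then that $g(\gamma):=\|A+\gamma B\|$ is a convex function of $\gamma$ (identifying $\C\simeq\R^2$ when $\F=\C$), which by hypothesis attains its global minimum at $\gamma=0$, and that $g$ is the supremum --- over the compact set of unit pairs $(x,y)$, with continuous dependence --- of the affine functions $\gamma\mapsto\operatorname{Re}(x^*Ay)+\operatorname{Re}(\gamma\,x^*By)$.

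The first-order optimality condition for such a supremum --- its subdifferential at $0$ is the convex hull of the gradients of the pieces active there, and this set must contain $0$ --- forces $0$ to lie in the convex hull of $\{\overline{x^*By}:(x,y)\text{ realizes }\|A\|\}$. Since conjugation is an $\R$-linear bijection of $\F$ fixing $0$, and using the parametrization of norm-realizing pairs by the unit sphere of $U$, this reads $0\in\conv\{\tfrac1\sigma\,x^*(BA^*)x:\ x\in U,\ \|x\|=1\}$, which is precisely the convex hull of the numerical range of the compression of $\tfrac1\sigma BA^*$ to $U$. By the Toeplitz--Hausdorff theorem this numerical range is convex when $\F=\C$, and it is a closed interval when $\F=\R$; either way it is compact and equal to its own convex hull, so $0$ lies in it. Hence there is a unit vector $x\in U$ with $x^*BA^*x=0$, and setting $y=A^*x/\sigma$ gives unit vectors with $x^*Ay=\sigma=\|A\|$ and $x^*By=\tfrac1\sigma x^*BA^*x=0$, which finishes the proof.

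The step I expect to be the main obstacle is the passage from ``$0$ is stationary at $\gamma=0$'' to ``$0$ is an \emph{attained} value of $x^*By$ among norm-realizing pairs'': stationarity alone only exhibits $0$ as a convex combination of such values, and it is exactly the convexity of the numerical range that bridges this gap. Making the first-order condition precise --- that the subdifferential of this supremum of affine functions is the convex hull of the active gradients, with no spurious closure because the index set is compact and the dependence continuous --- is the one genuinely technical point. If one prefers to avoid convex-analytic machinery, the same conclusion can be obtained directly: were $0$ outside the numerical range, separating it from that compact convex set by a real hyperplane yields $\mu\in\F$ with $|\mu|=1$ such that the Hermitian part of $\mu\,BA^*$ is positive definite on $U$, and then a Rayleigh-quotient estimate on $(A-t\mu B)(A-t\mu B)^*=AA^*-t\mu BA^*-t\bar\mu AB^*+t^2BB^*$ shows $\|A-t\mu B\|<\|A\|$ for all small $t>0$, contradicting Birkhoff--James orthogonality; here the delicate part is securing a bound uniform over \emph{all} unit vectors, including those with a nonzero component orthogonal to $U$, which relies on the spectral gap of $AA^*$ below $\sigma^2$.
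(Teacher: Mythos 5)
The paper itself contains no proof of this theorem: it is quoted verbatim from \cite[Theorem 3.1]{LS}, so there is no internal argument to compare yours against; judged on its own, your proof is essentially correct and follows the standard route for results of Bhatia--\v{S}emrl/Li--Schneider type. The reverse implication is the usual one-liner. For the forward implication, your parametrization of the norm-attaining pairs by unit vectors $x$ in the top singular subspace $U$ (with $y=A^*x/\sigma$) is accurate, and the chain of reasoning --- $0$ minimizes the convex function $g(\gamma)=\|A+\gamma B\|$, which is the maximum over a compact set of pairs of the affine functions $\gamma\mapsto\operatorname{Re}(x^*Ay)+\operatorname{Re}(\gamma\,x^*By)$; hence $0\in\partial g(0)=\conv\{\text{active gradients}\}$, i.e.\ $0\in\conv\{x^*By\}$ over norm-attaining pairs (the conjugation is harmless); this set is the numerical range of the compression of $\frac{1}{\sigma}BA^*$ to $U$, convex by Toeplitz--Hausdorff in the complex case and an interval in the real case --- does close the argument, and you correctly identify the Danskin/Ioffe--Tikhomirov subdifferential formula as the one technical step (no closure issue arises since the active set is compact and the gradient map continuous). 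Your fallback separation argument is also sound, and in fact you do not need a uniform spectral-gap estimate: if $\operatorname{Re}(\mu\,x^*BA^*x)\ge\delta>0$ for all unit $x\in U$ yet $\|A-t_k\mu B\|\ge\|A\|$ for some $t_k\to0^+$, pick unit vectors $u_k$ with $\|(A-t_k\mu B)^*u_k\|\ge\sigma$ and pass to a convergent subsequence $u_k\to u$; expanding $\|(A-t_k\mu B)^*u_k\|^2$ and using $\|A^*u_k\|\le\sigma$ gives $\operatorname{Re}(\mu\,u_k^*BA^*u_k)\le t_k\|B\|^2/2\to0$, while $\|A^*u\|=\sigma$ forces $u\in U$, a contradiction by continuity. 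One pedantic remark: the case $A=0$ is not merely degenerate --- for $1\times 1$ matrices with $B\ne0$ the stated equivalence actually fails, while for $\min(m,n)\ge2$ suitable $x,y$ always exist --- but this has no bearing on how the theorem is used in the paper.
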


Finally, we note that Birkhoff-James orthogonality is actually equivalent to a local optimality property, in the sense that \eqref{E:BJ} is equivalent to:
\begin{equation}\label{E:BJ-local}\text{There exists $\epsilon>0$ such that:}\quad\|A\|\leq \|A+ \gamma B\|\quad\text{for any $\gamma\in\F$ with } |\gamma|<\epsilon.
\end{equation}
To see this, suppose $\|A \| \leq \|A + \gamma B\|$.  Then 
$$2\|A\| \leq 2 \|A + \gamma B\| = \|A + (A+2\gamma B)\| \leq \|A\| + \|A + 2\gamma B\|,$$
whence $\|A\| \leq \|A + 2\gamma B\|$.

Thus, in a manner analogous to maximizing a function by finding its critical points, one can use Birkhoff-James orthogonality to find the locations where $d^D(i,j)$ might be achieved.  With the help of Theorem \ref{T:BJ} to provide a tractable characterization, we shall see this idea employed as the key tool in the proof of Proposition \ref{P:irred}.

\section{Path Graphs}\label{sect:path}

Assume $D = (d_{ij}) \in M_n(\C)$  with $d_{i,i+1} = d_{i+1,i} > 0$
for $i = 1, \dots, n-1$, and $d_{i,j} = 0$ otherwise.  It is a general fact (see \cite[Theorem 3]{Besnard}) that the noncommutative distance between any two vertices $i$ and $j$ only depends on the Dirac operator induced by $D$ on the subgraph obtained by taking the union of all paths connecting $i$ to $j$.  As a result, to determine 
\[d^D(i,j) = \max \{a_1 - a_n : A = \diag(a_1, \dots, a_n), \|AD-DA\| = 1, a_1, \dots, a_n \in \IR\},\]
we may focus on the case $(i,j) = (1,n)$, since the union of all paths joining $i$ and $j$ is simply the unique induced path joining $i$ and $j$.

\subsection{Further Reformulation of the Problem for Paths and Viable Vectors}\label{sec-viable_vecs}

We begin by providing some equivalent characterizations of $d^D(1,n)$ that will prove more fruitful to work with.  To simplify notation, we shall write $d_i$ for $d_{i,i+1}$ for the rest of this section; that is, 
$$D = \begin{bmatrix} 0 & d_1 & 0 & \dots & 0 \\ d_1 & 0 & d_2 & & \vdots \\ 0 & d_2 & \ddots & \ddots & 0 \\ \vdots & & \ddots & \ddots & d_{n-1} \\ 0 & \dots & 0 & d_{n-1} & 0 \end{bmatrix}.$$

We will make frequent use of the following matrices related to $D$ throughout the remainder of this paper.

\begin{definition}
 \label{def:LandT}
 Let $d = (d_1, \dots, d_{n-1})$ be fixed, and let $z = (z_1, \dots, z_{n-1}) \in \IR^{n-1}$.  Define $L(d,z)$ (or simply $L(z)$, when $d$ is understood) to be the symmetric tridiagonal matrix with zeros on the diagonal and $d_1 z_1, \dots, d_{n-1} z_{n-1}$ on the superdiagonal, and let $T(d,z)$ (or simply $T(z)$, when $d$ is understood) be the $\lfloor \frac{n+1}{2} \rfloor \times \lfloor \frac{n}{2} \rfloor$ matrix
\begin{equation}\label{E:T(z)}
T(z) = \begin{bmatrix} d_1 z_1 & & \\ d_2 z_2 & d_3 z_3 & \\ & d_4 z_4 & d_5 z_5 & \\ & & & \ddots \end{bmatrix}.
\end{equation} 
\end{definition}

\begin{remark}
\label{rem:dtomunu}
We remark that $d_iz_i$ will be in row $\left\lceil \frac{i+1}{2}\right\rceil = \left\lfloor \frac{i + 2}{2} \right\rfloor$ and column $\left\lceil \frac{i}{2}\right\rceil = \left\lfloor \frac{i + 1}{2} \right\rfloor$ of $T(z)$.
\end{remark}

\begin{lemma}\label{L:equiv}
Suppose $D \in M_n(\IR)$ is a symmetric tridiagonal matrix with zeros on the diagonal and $d_1, \dots, d_{n-1} > 0$ on the superdiagonal. Then, 
\begin{align*}
d^D(1,n) = &\max \{a_1 - a_n : A = \diag(a_1, \dots, a_n), \|AD-DA\| = 1, a_1, \dots, a_n \in \IR\} \\
= &\max \{z_1 + \dots + z_{n-1} : z_j \geq 0, \; \rho(L(z)) = 1\} \\
= &\max \{z_1 + \dots + z_{n-1} : z_j \geq 0, \; \|T(z)\| = 1\}.
\end{align*}
\end{lemma}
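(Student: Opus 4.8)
The first equality is just the restatement of $d^D(1,n)$ already derived in Section~\ref{sec-first-reduc}, so the work is in the second and third equalities. The plan is to pass from the variables $a_1,\dots,a_n$ to the ``edge difference'' variables $z_i = a_i - a_{i+1}$ (possibly up to a sign to be fixed later), and to identify $\|T(z)\|$ with $\rho(L(z))$. First I would record that $a_1 - a_n = \sum_{i=1}^{n-1}(a_i - a_{i+1})$, so the objective becomes linear in the differences; the constraint $\|AD - DA\| = 1$ depends only on these differences because $[A+rI, D] = [A,D]$, so without loss of generality we may normalize, say $a_n = 0$, making the map $(a_1,\dots,a_{n-1}) \leftrightarrow (z_1,\dots,z_{n-1})$ a linear bijection. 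The key computation is that the $(i,i+1)$ entry of $[A,D]$ is $(a_i - a_{i+1})d_i = z_i d_i$, so $[A,D]$ is the skew-symmetric matrix with $\pm z_i d_i$ on the off-diagonal; by Proposition~\ref{skew} its norm equals the norm of the symmetric tridiagonal matrix with the same $|z_i d_i|$ on the superdiagonal, which (again by a sign change $z_i \mapsto |z_i|$, absorbed by a diagonal orthogonal conjugation) is $L(z)$ when $z_i \ge 0$. The sign reduction is justified because replacing $a_i$ by a monotone relabeling only increases $\sum z_i$ while not increasing $\|[A,D]\|$; more carefully, one argues that at a maximizer the $a_i$ may be taken nonincreasing, so $z_i \ge 0$, and conversely any $z_i \ge 0$ with $\rho(L(z))=1$ comes from such an $A$.

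Next I would connect $\rho(L(z)) = 1$ with $\|L(z)\| = 1$: since $L(z)$ is real symmetric, $\rho(L(z)) = \|L(z)\|$, so the second line's constraint is literally $\|L(z)\| = 1$. The remaining task is the equivalence $\|L(z)\| = \|T(z)\|$. Here I would use the bipartite structure of the path: the tridiagonal matrix $L(z)$ has a natural $2\times 2$ block form after permuting indices into ``odd'' and ``even'' classes, $L(z) \cong \begin{bmatrix} 0 & B \\ B^t & 0 \end{bmatrix}$ where $B = T(z)$ is exactly the bipartite adjacency block (this is the content of Remark~\ref{rem:dtomunu}, which tracks where each $d_i z_i$ lands). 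For such a block antidiagonal matrix, the nonzero eigenvalues are $\pm\sigma_k(B)$ where $\sigma_k$ are the singular values of $B$, so $\|L(z)\| = \sigma_{\max}(B) = \|T(z)\|$. This identification also explains the dimensions $\lfloor (n+1)/2\rfloor \times \lfloor n/2\rfloor$ of $T(z)$: the odd-indexed and even-indexed vertices of a path on $n$ vertices.

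The main obstacle I anticipate is the careful bookkeeping in the reduction to $z_i \ge 0$ — one must be sure that restricting to nonnegative $z_i$ does not shrink the supremum. The cleanest way is: given any feasible $A$, sort its diagonal entries in nonincreasing order to get $A'$; sorting is achieved by a permutation conjugation, and for a path graph one can instead argue directly that $|a_1 - a_n| \le \sum |a_i - a_{i+1}|$ with equality when the $a_i$ are monotone, while $\|[A',D]\| \le \|[A,D]\|$ requires knowing that replacing each $z_i$ by $|z_i|$ does not increase $\|L(z)\|$ — true because $L(|z|) = P\,|L(z)|\,P$ is not quite right, so instead one uses that $L(z)$ and $L(|z|)$ are related by a diagonal $\pm1$ similarity $\Delta L(z)\Delta = L(z')$ for a suitable sign pattern $z'$, and then monotonicity of norm under taking absolute values of entries of a symmetric tridiagonal (equivalently bipartite) matrix follows from the singular-value reformulation $\|T(z)\| \le \|T(|z|)\|$ since the entries of $T(|z|)$ dominate those of $T(z)$ entrywise and $T$ has disjoint ``supports'' in a suitable sense. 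I would present this last monotonicity via the $T(z)$ picture, where it is most transparent, thereby also motivating why the third formulation is the one to carry forward into the rest of the section.
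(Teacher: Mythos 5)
Your proposal is correct and follows essentially the same route as the paper: change variables to $z_i=a_i-a_{i+1}$, pass from the skew-symmetric commutator to the symmetric tridiagonal $L(z)$, reduce to $z_i\ge 0$ via a diagonal $\pm1$ similarity (the paper's matrix $G$), use $\rho(L(z))=\|L(z)\|$ for real symmetric matrices, and identify $\|L(z)\|=\|T(z)\|$ through the odd/even bipartite permutation similarity to $\begin{bmatrix}0 & T(z)\\ T(z)^t & 0\end{bmatrix}$. The only wobble, the momentary suggestion to sort the $a_i$ by a permutation conjugation (which would not fix $D$), is abandoned in favor of the sign-similarity argument, so it does not affect correctness.
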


\begin{proof}
By compactness, all maxima exist.  Writing $z_j = a_j - a_{j+1}$ for $j= 1, \dots, n-1$, we have $AD - DA = \sum_{j=1}^{n-1} z_j d_j (E_{j,j+1} - E_{j+1,j})$.  
Let $Q = \diag(i, i^2, \dots, i^n)$. Then, 
\[i Q^*(AD-DA)Q = \sum_{j=1}^{n-1} z_j d_j (E_{j,j+1}+E_{j+1,j}) = L(z).\] 
Since $\|iQ^*(AD-DA)Q\| = \|AD-DA\|$, the first maximum equals
$$\max \{z_1 + \dots + z_{n-1} : z \in \IR^{n-1}, \; \|L(z)\| \leq 1\}.$$
Furthermore, there exists a matrix $G = \diag(g_1, \dots, g_n)$, with $g_1, \dots, g_n \in \{1, -1\}$, such that
\[GL(z)G 
= \sum_{j=1}^{n-1} g_j g_{j+1} z_j d_j (E_{j,j+1}+E_{j+1,j})
= \sum_{j=1}^{n-1} |z_j| d_j (E_{j,j+1}+E_{j+1,j}).\]
Since $G$ is orthogonal, $\|GL(z)G\| = \|L(z)\|$, so we may assume $z_j \geq 0$ for all $j$.  Since $L(z)$ is a real symmetric matrix, its norm equals its spectral radius $\rho(L(z))$, so the first two maxima coincide.

Note that $T(z)$ is the submatrix of $L(z)$ with $1,3,5, \dots$ as row indices and $2, 4, 6, \dots$ as column indices, so $L(z)$ is permutationally similar  to 
$$\begin{bmatrix} 0_k & T(z) \\ T(z)^t & 0_{n-k} \end{bmatrix}$$
where $k = {\lfloor \frac{n+1}{2} \rfloor}$.  Thus the eigenvalues of $L(z)$ occur in plus/minus pairs, and the nonnegative eigenvalues of $L(z)$ are precisely the singular values of $T(z)$, whence the second and third maxima coincide.
\end{proof}

There are now two situations to consider, depending on whether or not $L(z)$ and $T(z)$ in Lemma \ref{L:equiv} are both reducible or irreducible.  Observe that $L(z)$ is irreducible precisely when $z_1, \dots, z_{n-1}$ are all nonzero.  We consider this irreducible case first; to concisely state the result, we need to introduce some notation.

\begin{definition}
\label{def:munu}
Given a vector of positive numbers $d=(d_1, \dots, d_{n-1})$, set $m = \lfloor \frac{n+1}{2} \rfloor$.  Define vectors $\mu(d) \in \IR^m$ and $\nu(d) \in \IR^{n-m}$ (or, simply $\mu$ and $\nu$ when $d$ is understood) by defining their entries to be 
\begin{equation}
	\mu_1 = \nu_1 = 1, \qquad \mu_{k+1} := \prod_{j = 1}^{k} \frac{d_{2j-1}}{d_{2j}},\qquad \nu_{k+1} := \prod_{j = 1}^{k} \frac{d_{2j}}{d_{2j+1}}.
\end{equation} 
\end{definition}

\begin{lem}
\label{lem:muTnu}
Let $\mu$ and $\nu$ be as in Definition \ref{def:munu}.  Then,
\[ \mu^t T(z) \nu = d_1 \cdot \sum_{i = 1}^{n-1} z_i.\]
\end{lem}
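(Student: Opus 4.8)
The plan is to compute the bilinear form $\mu^t T(z) \nu$ directly by expanding it as a double sum over the entries of $T(z)$, and to show that each term $d_i z_i$ contributes exactly $d_1 z_i$ to the total. Recall from Remark \ref{rem:dtomunu} that the entry $d_i z_i$ sits in row $r_i := \lceil \frac{i+1}{2} \rceil$ and column $c_i := \lceil \frac{i}{2} \rceil$ of $T(z)$. Hence
\[
\mu^t T(z) \nu = \sum_{i=1}^{n-1} \mu_{r_i} \, d_i z_i \, \nu_{c_i},
\]
so it suffices to prove that $\mu_{r_i} d_i \nu_{c_i} = d_1$ for every $i$, since then the right-hand side collapses to $d_1 \sum_i z_i$.

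The verification of $\mu_{r_i} d_i \nu_{c_i} = d_1$ splits naturally into the cases $i$ odd and $i$ even. First I would handle $i = 2k-1$ odd: then $r_i = k$ and $c_i = k$, so I must show $\mu_k \, d_{2k-1} \, \nu_k = d_1$. Using the definitions $\mu_k = \prod_{j=1}^{k-1} \frac{d_{2j-1}}{d_{2j}}$ and $\nu_k = \prod_{j=1}^{k-1} \frac{d_{2j}}{d_{2j+1}}$, the product $\mu_k \nu_k$ telescopes: the numerator $\prod_{j=1}^{k-1} d_{2j-1} d_{2j}$ over the denominator $\prod_{j=1}^{k-1} d_{2j} d_{2j+1}$ simplifies, after cancelling the common factor $\prod_{j=1}^{k-1} d_{2j}$, to $\frac{d_1 \cdot \prod_{j=2}^{k-1} d_{2j-1}}{\prod_{j=1}^{k-1} d_{2j+1}} = \frac{d_1}{d_{2k-1}}$ (the remaining numerator factors $d_3, d_5, \dots, d_{2k-3}$ cancel against the denominator factors $d_3, d_5, \dots, d_{2k-3}$, leaving $d_1$ on top and $d_{2k-1}$ on the bottom). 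Multiplying by $d_{2k-1}$ gives $d_1$, as desired. Next, for $i = 2k$ even: then $r_i = k+1$ and $c_i = k$, so I must show $\mu_{k+1} \, d_{2k} \, \nu_k = d_1$. A nearly identical telescoping argument, now with $\mu_{k+1} = \prod_{j=1}^{k} \frac{d_{2j-1}}{d_{2j}}$ and $\nu_k = \prod_{j=1}^{k-1} \frac{d_{2j}}{d_{2j+1}}$, yields $\mu_{k+1} \nu_k = \frac{d_1}{d_{2k}}$, and multiplying by $d_{2k}$ again gives $d_1$. This covers all $i$ from $1$ to $n-1$.

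There is essentially no serious obstacle here; the statement is a bookkeeping identity about telescoping products of the $d_j$'s. The only points requiring care are (i) correctly reading off the row and column indices of each entry of $T(z)$ from Remark \ref{rem:dtomunu} and checking that the index $m = \lfloor \frac{n+1}{2}\rfloor$ and $n-m$ are large enough that $\mu_{r_i}$ and $\nu_{c_i}$ are actually defined for all relevant $i$ (which follows since $r_i \le \lfloor \frac{n+1}{2}\rfloor$ and $c_i \le \lfloor \frac{n}{2}\rfloor$ for $1 \le i \le n-1$), and (ii) handling the small-$k$ edge cases $k=1$ — where empty products are read as $1$, so $\mu_1 = \nu_1 = 1$ and the claimed identities $d_1 \nu_1 = d_1$ and $\mu_2 d_2 \nu_1 = \frac{d_1}{d_2}\cdot d_2 = d_1$ hold trivially. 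I would write out the odd case in full and then remark that the even case is analogous, rather than repeating the nearly identical computation.
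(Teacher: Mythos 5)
Your proof is correct and follows essentially the same route as the paper: the paper also computes the bilinear form directly, observing that $\mu_j d_{2j-1} = \mu_{j+1} d_{2j} = d_1/\nu_j$ (i.e.\ the same telescoping identity you verify), so that $\mu^t T(z)$ has entries $\frac{d_1}{\nu_j}(z_{2j-1}+z_{2j})$ and contracting with $\nu$ yields $d_1\sum_i z_i$. Your entry-by-entry formulation $\mu_{r_i} d_i \nu_{c_i} = d_1$ is just a slight reorganization of the same computation, with the telescoping spelled out more explicitly than in the paper.
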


\begin{proof}
First, we remark that
\[ \mu^t T(z) = (d_1(z_1 + z_2), \frac{d_1d_3}{d_2}(z_3 + z_4), \dots) = (\frac{d_1}{\nu_1}(z_1 + z_2), \frac{d_1}{\nu_2}(z_3 + z_4), \dots),\]
where the last term of $\mu^t T(z)$ is either $\frac{d_1}{\nu_{\lfloor n/2 \rfloor}}(z_{n-2} + z_{n-1})$ or $\frac{d_1}{\nu_{\lfloor n/2 \rfloor}}z_{n-1}$, depending on whether $n$ is odd or even.  Thus,
\[ \mu^t T(z) \nu = d_1 \sum_{i = 1}^{n-1} z_i,\]
as desired.
\end{proof}

\begin{proposition}\label{P:irred}
Suppose that the maxima in Lemma \ref{L:equiv} are attained at $z = (z_1, \dots, z_{n-1}) \in \IR^{n-1}$ with positive entries.  Then the maximum value of $\sum_{j=1}^{n-1} z_j$ 
is given by 
\begin{equation}\label{E:max-irred}
\sum_{j=1}^{n-1} z_j = \frac{\|\mu\| \|\nu\|}{d_1} = \frac{1}{d_1} \sqrt{1 + \mu_2^2 + \dots + \mu_{\lfloor \frac{n+1}{2} \rfloor}^2} \sqrt{1 + \nu_2^2 + \dots + \nu_{\lfloor \frac{n}{2} \rfloor}^2}.
\end{equation}
\end{proposition}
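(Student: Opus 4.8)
The plan is to turn maximality of $z$ into a critical-point condition and read that condition off via Birkhoff--James orthogonality. Throughout, $z = (z_1,\dots,z_{n-1})$ is the given maximizer, so $z_i > 0$ for all $i$ and $\|T(z)\| = 1$; write $m = \lfloor\tfrac{n+1}{2}\rfloor$, so that $T(z)\in M_{m,\,n-m}(\R)$. The first thing I would establish is that the largest singular value of $T(z)$ is \emph{simple}: since all $z_i > 0$, the matrix $L(z)$ is irreducible symmetric tridiagonal with nonzero off-diagonal entries, hence has distinct eigenvalues (a standard property of Jacobi matrices), and by the plus/minus pairing of the spectrum of $L(z)$ recorded in the proof of Lemma~\ref{L:equiv} the number $\|T(z)\| = \rho(L(z)) = 1$ is a simple singular value of $T(z)$. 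Fix unit left/right singular vectors $u\in\R^{m}$, $v\in\R^{n-m}$ for this singular value, normalized so that $u^{t}T(z)v = 1$; by simplicity they are unique up to a simultaneous sign change.

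Next I would show that $T(z)$ is Birkhoff--James orthogonal to $T(w)$ for every $w\in\R^{n-1}$ with $\sum_i w_i = 0$. If this failed, then by convexity of $\gamma\mapsto\|T(z)+\gamma T(w)\| = \|T(z+\gamma w)\|$ (equivalently, by the local characterization \eqref{E:BJ-local}) there would be arbitrarily small $\gamma$ with $\|T(z+\gamma w)\| < 1$; for such $\gamma$ the vector $z+\gamma w$ still has positive entries, so rescaling it to operator norm $1$ gives a feasible vector with coordinate sum $\|T(z+\gamma w)\|^{-1}\sum_i z_i > \sum_i z_i$ (here one uses $\sum_i z_i > 0$ and $\sum_i w_i = 0$), contradicting maximality. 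Applying Theorem~\ref{T:BJ}, for each such $w$ there are unit vectors $x,y$ with $x^{t}T(z)y = 1$ and $x^{t}T(w)y = 0$; but by simplicity the only unit vectors satisfying $x^{t}T(z)y = 1$ are $\pm(u,v)$, so in fact $u^{t}T(w)v = 0$ for \emph{every} $w$ with $\sum_i w_i = 0$.

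Now I would cash this in. By Remark~\ref{rem:dtomunu} we have $T(w) = \sum_{i=1}^{n-1} w_i\, d_i\, E_{r_i,c_i}$ with $r_i = \lceil\tfrac{i+1}{2}\rceil$ and $c_i = \lceil\tfrac{i}{2}\rceil$, so the previous line says $\sum_i w_i\,(d_i u_{r_i} v_{c_i}) = 0$ whenever $\sum_i w_i = 0$; hence $d_i u_{r_i} v_{c_i}$ is independent of $i$, say $d_i u_{r_i} v_{c_i} = c$ for all $i$. Pairing with $z$ gives $c\sum_i z_i = u^{t}T(z)v = 1$, so $c\neq 0$, and since the $r_i$ (resp.\ $c_i$) exhaust $\{1,\dots,m\}$ (resp.\ $\{1,\dots,n-m\}$), every entry of $u$ and of $v$ is nonzero. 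Dividing the equations for $i = 2k-1$ and $i = 2k$, and for $i = 2k$ and $i = 2k+1$, gives $u_{k+1}/u_k = d_{2k-1}/d_{2k}$ and $v_{k+1}/v_k = d_{2k}/d_{2k+1}$ — these are exactly the recursions of Definition~\ref{def:munu} — whence $u = u_1\mu$ and $v = v_1\nu$ with $|u_1| = \|\mu\|^{-1}$ and $|v_1| = \|\nu\|^{-1}$. Finally Lemma~\ref{lem:muTnu} gives $1 = u^{t}T(z)v = u_1 v_1\, d_1\sum_i z_i$; since $\sum_i z_i$ and $d_1$ are positive, $u_1 v_1 = (\|\mu\|\|\nu\|)^{-1}$, and therefore $\sum_i z_i = \|\mu\|\,\|\nu\|/d_1$, which is \eqref{E:max-irred} once $\|\mu\|^{2}$ and $\|\nu\|^{2}$ are expanded.

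The step I expect to be the main obstacle is establishing and then exploiting simplicity of the top singular value. Theorem~\ref{T:BJ} only yields a norming pair for one matrix at a time, and it is simplicity that forces a single pair $(u,v)$ to witness orthogonality to the entire $(n-2)$-dimensional family $\{T(w):\sum_i w_i = 0\}$ simultaneously — which is precisely what upgrades the soft constraint ``$u^{t}T(w)v = 0$ for all $w$ with $\sum_i w_i = 0$'' to the rigid one ``$d_i u_{r_i}v_{c_i}$ is constant in $i$,'' from which the formula drops out. The perturbation/rescaling argument and the recursion-matching are routine once the indexing conventions of Definition~\ref{def:LandT} and Remark~\ref{rem:dtomunu} are handled with care.
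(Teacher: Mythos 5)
Your proof is correct and follows essentially the same route as the paper's: maximality yields Birkhoff--James orthogonality of $T(z)$ to the sum-preserving perturbation directions, Theorem \ref{T:BJ} together with uniqueness of the norming pair forces the recursions identifying the singular vectors with $\mu/\|\mu\|$ and $\nu/\|\nu\|$ (up to sign), and Lemma \ref{lem:muTnu} gives the formula. The only (harmless) variation is that you obtain uniqueness of the norming pair from simplicity of the eigenvalues of the Jacobi matrix $L(z)$ and fix the sign at the end, whereas the paper invokes the Perron--Frobenius theorem to get a positive norming pair directly.
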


\begin{proof}
Using the notation in Lemma \ref{L:equiv}, since $z_j > 0$ for all $j$, $L(z)$ is an irreducible nonnegative matrix, so by the Perron-Frobenius theorem there exists a unique unit vector $v \in \IR^n$ with positive entries such that $L(z) v = \rho(L(z)) v = v$; moreover, 1 is a simple eigenvalue.  Since the nonnegative eigenvalues of $L(z)$ are the singular values of $T(z)$, $T(z)$ has a unique norm-attaining vector (modulo scalar multiplication).

Let $m = {\lfloor \frac{n+1}{2} \rfloor}$. 
Because $$\begin{bmatrix} 0_m & T(z) \\ T(z)^t & 0_{n-m} \end{bmatrix}$$ 
is permutationally similar to $L(z)$, there exist positive vectors $x \in \IR^m$ and $y \in \IR^{n-m}$ such that 
$$\begin{bmatrix} 0_m & T(z) \\ T(z)^t & 0_{n-m} \end{bmatrix} \begin{bmatrix} x \\ y \end{bmatrix} = \begin{bmatrix} x \\ y \end{bmatrix},$$
so $T(z)y=x$ and $T(z)^t x = y$.  Then $\|x\|^2 = \|T(z)y\|^2 = y^t T(z)^t T(z) y = y^t y = \|y\|^2$, so without loss of generality we may assume that both $x$ and $y$ are unit vectors.  Then $x, y$ are the unique positive unit vectors satisfying $x^t T(z) y = 1$.

Note $T(z) = \sum_{j=1}^{n-1} z_j R_j$ where $R_{2k} = d_{2k} E_{k+1,k}$ and $R_{2k-1} = d_{2k-1} E_{k,k}$.  We claim that $\|T(z)+t(R_j -R_{j+1})\| \ge \|T(z)\|$ for all $t\in \IR$ satisfying $|t| < \min\{z_j, z_{j+1}\}$.  By way of contradiction, suppose there exists $t \in \IR$ with $|t| <  \min\{z_j, z_{j+1}\}$ such that $\|T(z)+t(R_j -R_{j+1})\| = d < 1$.
If 
$$(\tilde z_1, \dots, \tilde z_n)  = (z_1, \dots, z_{j-1}, z_j+t, z_{j+1}-t, z_{j+2}, \dots, z_n)/d,$$
then 
$$\|T(\tilde{z})\| = \|T(z)+t(R_j -R_{j+1})\|/d =1$$
and 
$$\sum_{j=1}^{n-1}\tilde z_j = \sum_{j=1}^{n-1} z_j/d > \sum_{j-1}^{n-1} z_j,$$
which contradicts that $z$ gives a maximum, proving the claim.

By \eqref{E:BJ-local}, $T(z)$ is Birkhoff-James orthogonal to $R_j -R_{j+1}$ for each $j$.  Then by Theorem \ref{T:BJ} there exist unit vectors $u, v$ such that $u^t T(z)v = 1$ and $u^t (R_j - R_{j+1}) v = 0$.  By the aforementioned uniqueness we may take $u=x$ and $v=y$.  Thus $x^t R_j y = x^t R_{j+1} y$ for $j=1, \dots, n-1$, which implies
\begin{equation}\label{E:recur-xy}
x_{k+1} = \frac{d_{2k-1}}{d_{2k}} x_k \quad \text{ and } \quad y_{k+1} = \frac{d_{2k}}{d_{2k+1}} y_k
\end{equation}
for $k \leq (n-1)/2$ and $k \leq (n-2)/2$ respectively.

As in Definition \ref{def:munu}, let $\mu$ and $\nu$ be vectors whose entries $\mu_k$ and $\nu_k$ for $k\geq1$ are
\begin{equation}\label{eq-def_muk_nuk}
	\mu_1 = \nu_1 = 1, \qquad \mu_{k+1} := \prod_{j = 1}^{k} \frac{d_{2j-1}}{d_{2j}},\qquad \nu_{k+1} := \prod_{j = 1}^{k} \frac{d_{2j}}{d_{2j+1}}.
\end{equation} 
By the recurrence relations \eqref{E:recur-xy}, and since $x,y$ are positive unit vectors, $x = \mu/\|\mu\|$, $y = \nu/\|\nu\|$.

Finally, using Lemma \ref{lem:muTnu}, we have 
\[1 = x^t T(z) y = \frac{\mu^t T(z) \nu}{\|\mu\| \|\nu\|} = \frac{d_1}{\|\mu\| \|\nu\|} \sum_{i = 1}^{n-1} z_i,\]
so 
\begin{equation}
\sum_{j=1}^{n-1} z_j = \frac{1}{d_1} \sqrt{1 + \mu_2^2 + \dots + \mu_{\lfloor \frac{n+1}{2} \rfloor}^2} \sqrt{1 + \nu_2^2 + \dots + \nu_{\lfloor \frac{n}{2} \rfloor}^2}.
\end{equation}
\end{proof}

In general the maxima in Lemma \ref{L:equiv} may occur at a nonnegative vector $z \in \IR^{n-1}$ with some zero entries.  In such a case $T(z)$ may be reducible, and the decomposition occurs at the locations where $z_j = 0$.  We introduce further notation.

\begin{definition}
\label{def:viablesetup}
Let $z \in \IR^{n-1}$ be a nonnegative vector.  Recursively define integers $\alpha_k$, $\beta_k$ for $k \geq 1$ as follows:
\begin{itemize}
\item $\beta_0 = 0$,
\item $\alpha_k$ is the least integer $i > \beta_{k-1}$ such that $z_i \ne 0$, and
\item $\beta_k$ is the greatest integer $i$ such that $z_{\alpha_k}, \dots, z_i$ are all nonzero.  (Note that $\beta_k = \alpha_k$ is possible.)
\end{itemize}
Further define 
\[ f_k \colonequals (d_{\alpha_k}, \dots, d_{\beta_k}),\quad\quad u_k \colonequals (z_{\alpha_k}, \dots, z_{\beta_k}),\quad\text{and}\quad T_k(z) \colonequals T(f_k, u_k).\]
\end{definition}

In general we do not have $T(z) = T_1(z) \oplus \dots \oplus T_r(z)$, but we do have $T(z) = \Theta_1 \oplus \dots \oplus \Theta_s$, where each $\Theta_j$ is either a zero matrix of some dimension, or is some $T_i(z)$ or $T_i(z)^t$, possibly padded with some zero rows or columns.  We illustrate with an example.  Suppose
$$T(z) = \begin{bmatrix}
0 & &  & & & \\
b & c & & &  \\
 & d & e & & & \\
 &&0&0& & & \\
 &&&h&i&&\\
 &&&&j&k&  \\
 &&&&&l&0
\end{bmatrix}.$$
Then, $$T_1(z) = \begin{bmatrix} b &  \\ c & d  \\ & e  \end{bmatrix} \qquad \text{ and } T_2(z) = \begin{bmatrix} h & \\ i & j \\ & k &l \end{bmatrix},$$
and one could choose $\Theta_1 = \begin{bmatrix} 0 \\ T_1^t(z) \\ 0 \end{bmatrix}$ and $\Theta_2 = \begin{bmatrix} T_2^t(z) & 0 \end{bmatrix}$ (other possibilities exist).  Despite the non-uniqueness of this decomposition, it is however the case that
$$\|T(z)\| = \|\Theta_1 \oplus \dots \oplus \Theta_s\|
= \max_k \|T_k(z)\|.$$

Then, Lemma \ref{L:equiv} gives
\begin{align*}
    d^D(1,n) &= \max \{z_1 + \dots + z_{n-1} : z_j \geq 0 , \|T_k(z)\| \leq 1\} \\
    &= \max \left\{ \sum_{k=1}^r \sum_{j=\alpha_k}^{\beta_k} z_j : z_j \geq 0, \|T_k(z)\| \leq 1 \right\} \\
    &= \sum_{k=1}^r  \max \left\{ \sum_{j=\alpha_k}^{\beta_k} z_j : z_j \geq 0, \|T_k(z)\| \leq 1 \right\}.
\end{align*}
Thus the problem reduces to applying Proposition \ref{P:irred} to $T_1(z), \dots, T_r(z)$, and the maximum value of $\sum_{j=1}^{n-1} z_j$ is found by summing the corresponding maxima for $T_1(z), \dots, T_r(z)$.  

Moreover, it turns out that, to achieve the maximum, there cannot be any extra zeros in the $\Theta_j$ summands of $T(z)$.  To summarize all of these results, we first introduce the following definition.

\begin{definition}\label{D:viable} 
We say that a nonnegative vector $z \in \IR^{n-1}$ is \textit{viable} if the following hold. 
\begin{enumerate}
    \item[(1)] There are no consecutive zeros in $z$, and $z_1 \ne 0$, $z_{n-1} \ne 0$.  (This means that $T(z)$ actually equals the direct sum of $T_1(z), \dots, T_r(z)$, or possibly some of their transposes.)
    \item[(2)] For each $k=1, \dots, r$, \[T_k(z) y_k = x_k\qquad \text{and}\qquad T_k(z)^t x_k = y_k,\] where $x_k = \dfrac{\mu(f_k)}{\|\mu(f_k)\|}$ and  $y_k = \dfrac{\nu(f_k)}{\|\nu(f_k)\|}$.
\end{enumerate}
\end{definition}

\begin{remark}
\label{rem:viable}
\begin{enumerate}
    \item[(1)] Note that it is possible that there is no positive vector $z \in \IR^{n-1}$ that is viable, since it is possible that no positive vector satisfies the second condition of Definition \ref{D:viable}. However, it is not difficult to see that a viable vector always exists: for example, if $n$ is even, then
    \[ z = \left(\frac{1}{d_1}, 0, \frac{1}{d_3}, 0, \frac{1}{d_5}, 
    \dots, 0, \frac{1}{d_{n-1}}\right)\]
    is viable, and, if $n$ is odd, then
    \[ z = \left(\frac{d_2}{d_1\sqrt{d_1^2 + d_2^2}}, \frac{d_1}{d_2\sqrt{d_1^2 + d_2^2}}, 0, \frac{1}{d_4}, 0, \frac{1}{d_6}, 0, \dots, 0, \frac{1}{d_{n-1}} \right)\]
    is viable.
    \item[(2)] If $z$ is viable and $T(z) = \Theta_1 \oplus \cdots \oplus \Theta_r$, then $\Theta_k = T_k(z)$ if $\alpha_k$ is odd (since there is a unique nonzero element in the row of $T(z)$ containing $d_{\alpha_k}z_{\alpha_k}$), whereas, if $\alpha_k$ is even, then $\Theta_k = T_k(z)^t$ (since either the row of $T(z)$ containing $d_{\alpha_k}z_{\alpha_k}$ contains two nonzero entries or $T_k(z) \in M_1(\IR)$). 
\end{enumerate}
\end{remark}

\begin{thm}
\label{thm:maxatviabledecomp}
Let $D \in M_n(\IR)$ be a symmetric tridiagonal matrix with zeros on the diagonal and $d_1, \dots, d_{n-1} > 0$ on the superdiagonal.

Then, 
\[ d^D(1,n) = \max \left\{ \sum_k \frac{\|\mu(f_k) \| \|\nu(f_k) \|}{d_{\alpha_k}} : z \in \IR^{n-1} \emph{ is viable }\right\}.\]
\end{thm}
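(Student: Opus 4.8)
The plan is to combine the reduction already carried out in the text with Proposition~\ref{P:irred}, and the main work is to justify that the maximum is actually attained at a \emph{viable} vector (as opposed to a merely nonnegative one whose support decomposition may carry extra zero rows/columns). First I would fix an optimal nonnegative $z\in\IR^{n-1}$ with $\|T(z)\|=1$ achieving $d^D(1,n)$, which exists by compactness as in Lemma~\ref{L:equiv}. Applying Definition~\ref{def:viablesetup} to $z$ produces the blocks $f_k,u_k,T_k(z)$, and the displayed computation preceding Definition~\ref{D:viable} already shows
\[
d^D(1,n) = \sum_{k=1}^r \max\Bigl\{\textstyle\sum_{j=\alpha_k}^{\beta_k} z_j : z_j\ge 0,\ \|T_k(z)\|\le 1\Bigr\},
\]
so the bulk of the statement reduces to handling each irreducible block $T_k(z)$ separately. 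For each $k$, the inner maximum is over vectors with all positive entries (by construction of $u_k$), so Proposition~\ref{P:irred} applies verbatim to $T_k(z)=T(f_k,u_k)$ and gives that this maximum equals $\|\mu(f_k)\|\,\|\nu(f_k)\|/d_{\alpha_k}$ (noting that the role of $d_1$ in Proposition~\ref{P:irred}, being the first superdiagonal entry of the block, is played by $d_{\alpha_k}$ here). Summing over $k$ then yields $d^D(1,n)=\sum_k \|\mu(f_k)\|\,\|\nu(f_k)\|/d_{\alpha_k}$ for this particular optimal $z$.

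It remains to show two things: first, that this optimal $z$ can be taken to satisfy condition~(1) of Definition~\ref{D:viable} (no consecutive zeros, and $z_1,z_{n-1}\neq 0$), and second, that it satisfies condition~(2). For condition~(1), I would argue that consecutive zeros or a leading/trailing zero are wasteful: if, say, $z_j=z_{j+1}=0$, one can examine the row/column of $T(z)$ indexed by the position where $d_jz_j$ and $d_{j+1}z_{j+1}$ would sit and observe that setting one of these entries to a small positive value $\epsilon$ increases $\sum z_j$ while, by a perturbation argument, keeping $\|T(z)\|$ controlled after rescaling by a factor $1+O(\epsilon)$—more carefully, one shows the relevant block is a strict submatrix and its norm strictly dominates, so a genuine improvement is possible, contradicting optimality; the edge cases $z_1=0$ or $z_{n-1}=0$ are handled the same way since the first/last superdiagonal entry $d_1z_1$ (resp.\ $d_{n-1}z_{n-1}$) lives alone in its row or column of $T(z)$. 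For condition~(2): once~(1) holds we genuinely have $T(z)=\Theta_1\oplus\cdots\oplus\Theta_r$ with each $\Theta_k$ equal to $T_k(z)$ or $T_k(z)^t$ (Remark~\ref{rem:viable}(2)), and within each block the argument of Proposition~\ref{P:irred}---Birkhoff--James orthogonality of $T_k(z)$ to each $R_j-R_{j+1}$ via~\eqref{E:BJ-local}, then Theorem~\ref{T:BJ}, then the Perron--Frobenius uniqueness of the norm-attaining vectors---forces precisely the recurrences~\eqref{E:recur-xy}, i.e.\ $x_k=\mu(f_k)/\|\mu(f_k)\|$ and $y_k=\nu(f_k)/\|\nu(f_k)\|$, which is exactly condition~(2).

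Finally I would check the reverse inequality: every viable $z$ is in particular a nonnegative vector that (after normalization so that $\max_k\|T_k(z)\|=1$, which costs nothing) is feasible in the maximization of Lemma~\ref{L:equiv}, and condition~(2) together with Lemma~\ref{lem:muTnu} pins down $\sum_{j=\alpha_k}^{\beta_k} z_j = \|\mu(f_k)\|\,\|\nu(f_k)\|/d_{\alpha_k}$ on each block (this is the identity $1 = x_k^t T_k(z) y_k = d_{\alpha_k}\sum_j z_j / (\|\mu(f_k)\|\|\nu(f_k)\|)$ from the end of the proof of Proposition~\ref{P:irred}, applied blockwise). Hence $d^D(1,n)\ge \sum_k \|\mu(f_k)\|\,\|\nu(f_k)\|/d_{\alpha_k}$ for every viable $z$, and combined with the first part we get equality with the max over viable $z$. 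The step I expect to be the main obstacle is the rigorous justification that optimal vectors may be taken to obey condition~(1)---specifically, turning the intuition ``extra zeros are wasteful'' into a clean perturbation argument that simultaneously controls $\|T(z)\|$ and strictly increases $\sum z_j$; the edge behavior at $z_1$ and $z_{n-1}$ and the bookkeeping of which padded summand $\Theta_k$ corresponds to which $T_k(z)$ (parity of $\alpha_k$) are the fiddly parts, but Remark~\ref{rem:viable} already isolates the needed facts.
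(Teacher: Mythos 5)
Your overall skeleton matches the paper's proof: reduce to the block decomposition determined by the zero pattern of an optimal $z$, apply Proposition \ref{P:irred} blockwise, verify that viable $z$ are genuinely feasible (each $\|T_k(z)\|=1$ by Perron--Frobenius, with the block sums pinned down by Lemma \ref{lem:muTnu}), and conclude by comparing the two maxima. The feasibility/reverse direction in your last paragraph is essentially the paper's step (ii) (one small correction: no normalization is needed or allowed there --- condition (2) of Definition \ref{D:viable} already forces $\|T_k(z)\|=1$ for every block, since $(x_k,y_k)$ is a positive eigenvector of the irreducible nonnegative matrix $\begin{bmatrix} 0 & T_k(z)\\ T_k(z)^t & 0\end{bmatrix}$ for the eigenvalue $1$; rescaling would change the block sums you then compute).

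The genuine gap is exactly the step you flag yourself: showing that an optimal $z$ satisfies condition (1) of Definition \ref{D:viable}, i.e.\ that the maximum cannot occur where $T(z)$ has a zero row or column (consecutive zeros, or $z_1=0$, or $z_{n-1}=0$). Your sketch does not close this. Setting the offending entry to $\epsilon>0$ makes the relevant block strictly larger as a nonnegative matrix, so its norm strictly \emph{increases}; after rescaling to restore $\|T(z)\|=1$ you lose roughly $(\text{norm increase})\cdot\sum_j z_j$ from the objective while gaining only $\epsilon$, and a rescaling factor of size $1+O(\epsilon)$, as you propose, gives no contradiction --- the gain and loss are of the same order and the sign is undetermined. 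What saves the argument is a structural fact you never use: the coupling entry attaches an irreducible block to an isolated zero coordinate, where the Perron vector of the unperturbed summand vanishes, so the first-order change of the top eigenvalue is zero and the norm increase is only $O(\epsilon^2)$ against an $O(\epsilon)$ gain. The paper avoids perturbation theory altogether by an explicit rebalancing: with $z_1,\dots,z_{m-1}>0$ and $z_m=0$, it sets $z'_j=tz_j$ for $j<m$ and $z'_m=2(1-t)(z_1+\cdots+z_{m-1})$ with $t$ chosen as in \eqref{E:t}, so that $\sum_j z'_j=(2-t)\sum_j z_j>\sum_j z_j$, and then shows $\rho(M(z'))\le 1$ by proving $I+M(z')$ is positive semidefinite via the congruence \eqref{E:congruence} and a Schur-complement estimate --- contradicting maximality. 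Either route works, but some argument of this kind (second-order perturbation with the vanishing Perron entry, or the paper's explicit congruence computation) must be supplied; as written, your proposal asserts the conclusion of the hardest step rather than proving it.
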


\begin{proof}
By Lemma \ref{L:equiv}, Proposition \ref{P:irred}, and the preceding discussion, $d^D(1,n)$ is achieved at some decomposition with the asserted formula.  There remain two things to check: (i) first, we must show that none of the summands 
in a direct sum decomposition of $T(z)$ has a zero row or column (so that $z$ is indeed viable), 
and (ii) second, we must ensure that all viable $z$ are legitimate candidates, that is, we must ensure that the decomposition of $T(z)$ for a viable $z$ satisfies the Birkhoff-James orthogonality conditions and the constraints $\|T_k(z)\|=1$.

To prove assertion (i), it will be easier to work with $L(z)$ as defined in Lemma \ref{L:equiv}.  Because $L(z)$ is permutationally similar to 
$$\begin{bmatrix} 0 & T(z) \\ T(z)^t & 0 \end{bmatrix},$$
the direct sum decomposition of $L(z)$ will have a $1 \times 1$ zero block if and only if some $\Theta_j$ in the decomposition $T(z) = \Theta_1 \oplus \dots \oplus \Theta_s$ has a zero row or column (i.e. $z$ is not viable).

We shall show that if the decomposition of $L(z)$ has a $1 \times 1$ zero block, then we can always combine it with another block to increase the objective function $z_1 + \dots + z_{n-1}$, so $d^D(1,n)$ cannot be attained at such a non-viable $z$.

Without loss of generality, up to a permutation similarity, we may assume all zero summands of $L(z)$ occur last.  To this end, suppose $L_{k+1}(z)$ is the first zero summand and $L_k(z)$ is an irreducible (nonzero) summand.  To simplify notation, without loss of generality we may suppose $k=1$ and $L_1(z)$ is an $m \times m$ matrix.  Then $z_1, \dots, z_{m-1} >0, z_m = 0$.

Suppose, to the contrary, that $d^D(1,n)$ is attained at this decomposition.  By Lemma \ref{L:equiv}, $$z_1 + \dots + z_{m} = \max\{w_1 + \dots + w_m : w_j \geq 0, \, \rho(M(w)) = 1 \},$$
where $M(w) \in M_{m+1}$ is the symmetric tridiagonal matrix 
$M(w) = \sum_{j=1}^m d_j w_j (E_{j,j+1} + E_{j+1,j})$.
Define $z_m' = 2(1-t)(z_1 + \dots + z_{m-1})$ and $z_j' = t z_j$ for $j=1, \dots, m-1$, where  we choose $t \in (0,1)$ 
to satisfy 
\begin{equation}\label{E:t}
4(1-t) d_m^2 \sum_{j=1}^{m-1} z_j^2 < 1.
\end{equation}

Then $\sum_{j=1}^m z_j' = (2-t) \sum_{j=1}^m z_j > \sum_{j=1}^m z_j$, so by our assumption of maximality, $\rho(M(z')) > 1$.  We shall show that 
in fact 
$\rho(M(z')) \leq 1$, giving the desired contradiction.

Because $M(z')$ is a real symmetric matrix whose eigenvalues occur in plus/minus pairs, the condition $\rho(M(z')) \leq 1$ is equivalent to $I + M(z')$ being positive semidefinite.
Writing $B = \sum_{j=1}^{m-1} z_j d_j (E_{j,j+1}+E_{j+1,j})$ and $v = (0, \dots, 0, z_{m}' d_{m})^t$, we have
$$I+M(z') = \begin{bmatrix} I_{m} + t B & v \cr v^t & 1 \cr\end{bmatrix}.$$
By our assumption of maximality, $\rho(B) = 1$, so $I_m + tB$ is positive definite, and hence invertible.
Let $S = \begin{bmatrix} I_{m} & (I_{m} + tB)^{-1} v \\ 0 & -1 \end{bmatrix}$, so $S$ is an invertible matrix satisfying 
\begin{equation}\label{E:congruence}
	S^t (I + M(z')) S = (I_{m}+tB) \oplus [1 - (z'_{m})^2 d_{m}^2 \xi],
\end{equation}
where $\xi$ is the $(m,m)$ entry of $(I_m+tB)^{-1}$.  By writing $B = W^t \diag(\mu_1, \dots, \mu_{m})W$ for an orthogonal matrix $W = (w_{ij}) \in M_{m}$, we have
$$\xi = (W^t(I+t\diag(\mu_1, \dots, \mu_{m})^{-1}W)_{m,m} = \sum_{j=1} \frac{w_{j,m}^2}{1+t\mu_j};$$
note that $|\mu_j| \leq 1$ because $\rho(B) = 1$.

Now $\rho(M(z')) \leq 1$ is equivalent to $I+M(z')$ being positive semidefinite; by the congruence \eqref{E:congruence}, this is equivalent to $(z'_m)^2 d_m^2 \xi \leq 1$.  Indeed,
\begin{align*}
	(z'_m)^2 d_m^2 \xi &= 4(1-t)^2 \left( \sum_{j=1}^{m-1} z_j^2 \right) d_m^2 \sum_{j=1} \frac{w_{j,m}^2}{1+t\mu_j} < (1-t)  \sum_{j=1} \frac{w_{j,m}^2}{1+t\mu_j} \qquad \text{ by } \eqref{E:t} \\
	&\leq (1-t) \sum_{j=1} \frac{w_{j,m}^2}{1-t} = 1 
\end{align*}
because $W$ is orthogonal, and we obtain the desired contradiction, proving (i).

To prove assertion (ii), recall that, by Definition \ref{D:viable}, $T_k(z) y_k = x_k$ and $T_k(z)^t x_k = y_k$ for a viable $z$.  Thus
$$\begin{bmatrix}0 & T_k(z) \\ T_k(z)^t & 0 \end{bmatrix} \begin{bmatrix} x_k \\ y_k \end{bmatrix} = \begin{bmatrix} x_k \\ y_k \end{bmatrix};$$
by the Perron-Frobenius theorem, the spectral radius of the block matrix is 1 (note that the block matrix is nonnegative and irreducible because $T_k(z)$ is).  Since the eigenvalues of the block matrix are plus/minus pairs of the singular values of $T_k(z)$, we have $\|T_k(z)\| = 1$.  The Birkhoff-James orthogonality conditions in the proof of Proposition \ref{P:irred} then follow from the way $x_k$ and $y_k$ are defined.
\end{proof}

\subsection{Determination of Viable Vectors}\label{sec-viable_det}

In general, to compute $d^D(1,n)$ one must find and check all viable $z \in \IR^{n-1}$.  Toward this end, we present some results that narrow the search considerably.

\begin{lem}
 \label{lem:prerefine}
 Let $1 < p < n-1$.  If 
 \[ f_1 \colonequals (d_1, \dots, d_{p-1}), \;
 f_2 \colonequals (d_{p+1}, \dots, d_{n-1}),\]
 \[ \fraka \colonequals \frac{\|\mu\| \|\nu\|}{d_1},\; \frakb \colonequals \frac{\|\mu(f_1)\| \|\nu(f_1)\|}{d_1}, \; \frakc \colonequals \frac{\|\mu(f_2)\| \|\nu(f_2)\|}{d_{p+1}},\]
 then
 \[ \fraka \ge \frakb + \frakc.\]
\end{lem}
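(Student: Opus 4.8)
The plan is to express $\fraka$, $\frakb$, $\frakc$ entirely in terms of the squared norms of $\mu=\mu(d)$, $\nu=\nu(d)$ and of their initial segments, and then to deduce the inequality from a single application of the Cauchy--Schwarz inequality in $\IR^2$.

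\emph{Step 1: locating $\mu(f_1),\nu(f_1)$ inside $\mu,\nu$.} Set $a=\lceil p/2\rceil$ and $b=\lfloor p/2\rfloor$, so that $a+b=p$. Since $f_1=(d_1,\dots,d_{p-1})$ consists of an initial segment of the $d_i$, the products in Definition \ref{def:munu} applied to $f_1$ coincide with the corresponding products for $d$, so $\mu(f_1)=(\mu_1,\dots,\mu_a)$ and $\nu(f_1)=(\nu_1,\dots,\nu_b)$ are literally the initial segments of $\mu$ and $\nu$. Writing $X=\|\mu\|^2$, $Y=\|\nu\|^2$, $X_1=\|\mu(f_1)\|^2$, $Y_1=\|\nu(f_1)\|^2$, we therefore get $X-X_1=\sum_{k>a}\mu_k^2\ge 0$ and $Y-Y_1=\sum_{k>b}\nu_k^2\ge 0$ (both strictly positive, since $p<n-1$).

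\emph{Step 2: evaluating $\|\mu(f_2)\|\,\|\nu(f_2)\|$.} Because $f_2=(d_{p+1},\dots,d_{n-1})$, every entry of $\mu(f_2)$ (resp.\ of $\nu(f_2)$) is a product of ratios $d_{2j-1}/d_{2j}$ (resp.\ $d_{2j}/d_{2j+1}$) whose indices are shifted by $p$; rewriting these as tails of the products defining $\mu$ and $\nu$, one finds that $\mu(f_2)$ and $\nu(f_2)$ are, after dividing out a single common normalizing scalar, the tails $(\mu_{a+1},\mu_{a+2},\dots)$ and $(\nu_{b+1},\nu_{b+2},\dots)$ of $\mu$ and $\nu$ — with the roles of $\mu$ and $\nu$ interchanged according as $p$ is even or odd. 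In either case this gives
\[\|\mu(f_2)\|^2\,\|\nu(f_2)\|^2=\frac{(X-X_1)(Y-Y_1)}{\mu_{a+1}^2\,\nu_{b+1}^2}.\]
A telescoping computation of the products defining $\mu_{a+1}$ and $\nu_{b+1}$ then gives the key identity
\[\mu_{a+1}\,\nu_{b+1}=\frac{d_1}{d_{p+1}},\]
valid for both parities of $p$, whence $\frakc=\dfrac{\|\mu(f_2)\|\,\|\nu(f_2)\|}{d_{p+1}}=\dfrac{1}{d_1}\sqrt{(X-X_1)(Y-Y_1)}$.

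\emph{Step 3: Cauchy--Schwarz.} Since $\fraka=\tfrac{1}{d_1}\sqrt{XY}$ and $\frakb=\tfrac{1}{d_1}\sqrt{X_1 Y_1}$, the inequality $\fraka\ge\frakb+\frakc$ is equivalent to
\[\sqrt{XY}\ \ge\ \sqrt{X_1 Y_1}+\sqrt{(X-X_1)(Y-Y_1)},\]
which is exactly the Cauchy--Schwarz inequality applied to the vectors $(\sqrt{X_1},\sqrt{X-X_1})$ and $(\sqrt{Y_1},\sqrt{Y-Y_1})$ in $\IR^2$, using $X_1+(X-X_1)=X$ and $Y_1+(Y-Y_1)=Y$. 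I expect the main obstacle to be the index bookkeeping in Step 2: one must keep straight that for odd $p$ the tail $\mu(f_2)$ corresponds to a rescaled tail of $\nu$ (and $\nu(f_2)$ to a rescaled tail of $\mu$), while for even $p$ the correspondence is ``straight'', and then verify the telescoping identity $\mu_{a+1}\nu_{b+1}=d_1/d_{p+1}$ in both cases; this is routine but error-prone, whereas everything else is immediate.
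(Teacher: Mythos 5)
Your proof is correct and takes essentially the same route as the paper's: identify $\mu(f_1),\nu(f_1)$ with initial segments of $\mu,\nu$ and $\mu(f_2),\nu(f_2)$ with rescaled tails, use the telescoping identity (in the paper, $d_{p+1}\mu_r\nu_s=d_1$; in yours, $\mu_{a+1}\nu_{b+1}=d_1/d_{p+1}$) to put $\frakc$ over $d_1$, and finish with the two-term Cauchy--Schwarz inequality, which is exactly the paper's closing identity $\fraka^2-(\frakb+\frakc)^2=\tfrac{1}{d_1^2}\left(\sqrt{u_1v_2}-\sqrt{u_2v_1}\right)^2\ge 0$. Your explicit tracking of the parity-dependent swap of the roles of $\mu$ and $\nu$ in the tails is just more detailed bookkeeping of the same argument.
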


\begin{proof}
 Let $\mu_r$ be the first $\mu_i$ involving $d_{p+1}$ and let $\nu_s$ be the first $\nu_i$ involving $d_{p+1}$. Thus $\mu_r, \nu_s$ are precisely the products of terms up to, but not including, 
$d_{p-1}/d_{p}$ and $d_{p}/d_{p+1}$.  Note $r, s$ may equal 1.
Thus,
$$\frakb = \frac{1}{d_{1}} 
\sqrt{1 + \mu_1^2 + \mu_2^2 + \cdots + \mu_{r-1}^2}
\sqrt{1 + \nu_1^2 + \nu_2^2 + \cdots + \nu_{s-1}^2},$$
while
\begin{align*}
    \frakc &= \frac{1}{d_{p+1}} \sqrt{1 + (\mu_{r+1}/\mu_r)^2 + \dots 
    + (\mu_{\lfloor (n+1)/2 \rfloor} /\mu_r)^2} \sqrt{1 + (\nu_{s+1}/\nu_s)^2 + \dots + (\nu_{\lfloor n/2 \rfloor}/\nu_s)^2} \\
    &= \frac{1}{d_{p+1} \mu_r \nu_s} \sqrt{\mu_r^2 + 
    \dots +  \mu_{\lfloor (n+1)/2 \rfloor}^2} \sqrt{\nu_s^2 + \dots +  \nu_{\lfloor n/2 \rfloor}^2} \\
    &= \frac{1}{d_{1}} \sqrt{\mu_r^2 + \dots +  \mu_{\lfloor (n+1)/2 \rfloor}^2} \sqrt{\nu_s^2 + \dots +  \nu_{\lfloor n/2 \rfloor}^2}.
\end{align*}
Let 
$$u_1 = 1 + \mu_1^2 + \cdots + \mu_{r-1}^2, \qquad 
u_2 = \mu_r^2 +  \cdots + \mu_{\lfloor (n+1)/2 \rfloor}^2,$$
$$v_1 = 1 + \nu_1^2 + \cdots + \nu_{s-1}^2, \qquad
v_2 = \nu_s^2 +  \cdots +  \nu_{\lfloor n/2 \rfloor}^2.$$
Then 
$$\fraka^2 - (\frakb+\frakc)^2
= \frac{1}{d_{12}^2}\left(\sqrt{u_1 v_2} - \sqrt{v_1 u_2}\right)^2 \ge 0,$$
as desired.
\end{proof}

We can use Lemma \ref{lem:prerefine} to prove the following, which shows that, once a viable $z$ is found, one need not check refinements of the decomposition.

\begin{prop}
 \label{prop:refine}
 Let $D \in M_n(\IR)$ be a symmetric tridiagonal matrix with zeros on the diagonal and $d_1, \dots, d_{n-1} > 0$ on the superdiagonal.  If $z$ and $z'$ are both viable and $z_i' = 0$ whenever $z_i = 0$, then 
  \[ \sum_{i = 1}^{n-1} z_i \ge \sum_{i = 1}^{n-1} z_i'.\]
\end{prop}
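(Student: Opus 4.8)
The plan is to rewrite $\sum_i z_i$ and $\sum_i z_i'$ using the block formula of Theorem~\ref{thm:maxatviabledecomp}, and then to compare the two decompositions block by block, paying for each ``extra'' zero of $z'$ with one application of Lemma~\ref{lem:prerefine}.

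First I would record the identity
\[ \sum_{i=1}^{n-1} w_i \;=\; \sum_k \frac{\|\mu(f_k)\|\,\|\nu(f_k)\|}{d_{\alpha_k}}, \]
valid for \emph{every} viable vector $w$, with $\alpha_k,\beta_k,f_k$ the data of Definition~\ref{def:viablesetup} applied to $w$: condition~(2) of Definition~\ref{D:viable} says $T_k(w)\bigl(\nu(f_k)/\|\nu(f_k)\|\bigr)=\mu(f_k)/\|\mu(f_k)\|$, so pairing with the unit vector $\mu(f_k)/\|\mu(f_k)\|$ and invoking Lemma~\ref{lem:muTnu} for the $k$-th block gives $\sum_{j=\alpha_k}^{\beta_k}w_j=\|\mu(f_k)\|\,\|\nu(f_k)\|/d_{\alpha_k}$, and summing over $k$ yields the identity. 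Next, with $Z=\{i:z_i=0\}$ and $Z'=\{i:z_i'=0\}$ we have $Z\subseteq Z'$; since $z$ and $z'$ are viable, neither $Z$ nor $Z'$ contains $1$, $n-1$, or two consecutive integers. Hence each block of $z'$ lies in a unique block of $z$, and if $[\alpha,\beta]$ is a block of $z$ then $Z'\cap[\alpha,\beta]\subseteq\{\alpha+1,\dots,\beta-1\}$ (for instance, if $\alpha>1$ then $\alpha-1\in Z\subseteq Z'$, so $\alpha\notin Z'$, and symmetrically at the right end). By the identity above it then suffices to prove the following combinatorial claim and sum it over the blocks $[\alpha,\beta]$ of $z$, taking $S=Z'\cap[\alpha,\beta]$: the maximal subintervals of $[\alpha,\beta]\setminus S$ are exactly the blocks of $z'$ contained in $[\alpha,\beta]$, so the sum of the right-hand sides recovers $\sum_i z_i'$ while the sum of the left-hand sides is $\sum_i z_i$.

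\emph{Claim.} If $\gamma<\delta$ and $S\subseteq\{\gamma+1,\dots,\delta-1\}$ contains no two consecutive integers, and $J_1=[\gamma_1,\delta_1],\dots,J_\ell=[\gamma_\ell,\delta_\ell]$ are the maximal subintervals of $[\gamma,\delta]\setminus S$, then
\[ \frac{\|\mu(d_\gamma,\dots,d_\delta)\|\,\|\nu(d_\gamma,\dots,d_\delta)\|}{d_\gamma} \;\ge\; \sum_{i=1}^{\ell}\frac{\|\mu(d_{\gamma_i},\dots,d_{\delta_i})\|\,\|\nu(d_{\gamma_i},\dots,d_{\delta_i})\|}{d_{\gamma_i}}. \]
I would prove this by induction on $|S|$, the case $|S|=0$ (where $\ell=1$ and $J_1=[\gamma,\delta]$) being trivial. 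For $|S|\ge1$ set $p=\min S$, so $\gamma<p<\delta$; relabelling $(d_\gamma,\dots,d_\delta)$ as a tuple of length $\delta-\gamma+1$ and applying Lemma~\ref{lem:prerefine} with the split at $p$ (legitimate since $1<p-\gamma+1<\delta-\gamma+1$) gives
\[ \frac{\|\mu(d_\gamma,\dots,d_\delta)\|\,\|\nu(d_\gamma,\dots,d_\delta)\|}{d_\gamma} \;\ge\; \frac{\|\mu(d_\gamma,\dots,d_{p-1})\|\,\|\nu(d_\gamma,\dots,d_{p-1})\|}{d_\gamma} + \frac{\|\mu(d_{p+1},\dots,d_\delta)\|\,\|\nu(d_{p+1},\dots,d_\delta)\|}{d_{p+1}}. \]
Since $p=\min S$, the interval $[\gamma,p-1]$ is exactly $J_1$; and since $S$ has no two consecutive elements, $S\setminus\{p\}\subseteq\{p+2,\dots,\delta-1\}$, so applying the inductive hypothesis to $[p+1,\delta]$ with zero set $S\setminus\{p\}$ bounds the second term on the right by the sum over $J_2,\dots,J_\ell$, which are precisely the maximal subintervals of $[p+1,\delta]\setminus S$. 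Adding completes the induction.

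The only substantive ingredient is Lemma~\ref{lem:prerefine}, which is already established; everything else is bookkeeping. The step I expect to be most error-prone is verifying that the intervals produced by the induction are genuine blocks with nonzero endpoints—so that one never splits at or abuts a zero of $z'$—and this is exactly where the full ``no two consecutive zeros, endpoints nonzero'' strength of viability (for both $z$ and $z'$) is used; stating the claim in the abstract form above is meant precisely to let this induction run without further reference to $z$ or $z'$.
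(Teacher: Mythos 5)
Your proposal is correct and takes essentially the same route as the paper: both rest on the identity $\sum_{j=\alpha_k}^{\beta_k} w_j = \|\mu(f_k)\|\,\|\nu(f_k)\|/d_{\alpha_k}$ for the blocks of a viable vector and then reduce the comparison to repeated applications of Lemma~\ref{lem:prerefine} at the extra zeros of $z'$, arguing by induction on their number. Your write-up merely spells out the block-by-block bookkeeping that the paper compresses into the phrase ``by induction, it suffices to consider the case of a single additional zero $z_p'=0$.''
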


\begin{proof}
By induction, it suffices to consider the result when all $z_i, z_i'$ are positive except $z_p' = 0$ for some unique $1 \le p \le n-1$. Note that if $p=1$ or $p = n-1$, then $z'$ is not viable, a contradiction.
 
Thus, we may assume $1 < p < n-1$.  By Proposition \ref{P:irred},
\[ \sum_{i = 1}^{n-1} z_i = \frac{\|\mu\| \|\nu\|}{d_1},\]
and if 
\[ f_1 \colonequals (d_1, \dots, d_{p-1}), \;
 f_2 \colonequals (d_{p+1}, \dots, d_{n-1}),\]
then, by Theorem \ref{thm:maxatviabledecomp}, 
\[
\sum_{i = 1}^{n-1} z_i' = \left(\sum_{i = 1}^{p-1} z_i'\right) + \left(\sum_{i=p+1}^{n-1} z_i' \right) =  \frac{\|\mu(f_1)\| \|\nu(f_1)\|}{d_1} + \frac{\|\mu(f_2)\| \|\nu(f_2)\|}{d_{p+1}},
\]
and the result now follows by Lemma \ref{lem:prerefine}.
\end{proof}

\begin{remark}
 It is possible for $z$ and $z'$ both to be viable, $z'$ to have more zero entries than $z$, but 
 \[ \sum_{i = 1}^{n-1} z_i \le \sum_{i = 1}^{n-1} z_i';\]
 Proposition \ref{prop:refine} applies only when the decomposition of $T(z')$ is a refinement of the decomposition of $T(z)$.  For example, if $n = 6$ and
 $d = (1, 2, 1, 2, 1),$ then one can check that there does not exist a positive vector that is viable (see Proposition \ref{prop:allviablez} below), but both \[ z = \left(\frac{2}{\sqrt{5}}, \frac{1}{2\sqrt{5}}, 0, \frac{1}{2\sqrt{5}}, \frac{2}{\sqrt{5}} \right)\]
 and $z' = (1,0,1,0,1)$ are viable.  In this case,
 \[ \sum_{i = 1}^5 z_i = \sqrt{5} < 3 = \sum_{i = 1}^5 z_i',\]
 even though $z'$ has more zero entries than $z$.  (For more information about verifying the claims in this remark, see Example \ref{ex:6}.)
\end{remark}

Our goal now is to provide a method for determining all viable $z$.  We first need a definition.

\begin{definition}
\label{def:truncmunu}
Let $\mu$ and $\nu$ be as in Definition \ref{def:munu}, and let $1 \le a \le b \le n-1$.  Define 
\[ \mu[a,b] \colonequals (\mu_i, \dots, \mu_j)^t,\]
where $d_az_a$ is in row $i$ of $T(z)$ and $d_bz_b$ is in row $j$ of $T(z)$, and define
\[ \nu[a,b] \colonequals (\nu_k, \dots, \nu_\ell)^t,\]
where $d_az_a$ is in column $k$ of $T(z)$ and $d_bz_b$ is in column $\ell$ of $T(z)$.  Note that, by Remark \ref{rem:dtomunu}, we have
\[ \mu[a,b] = \left(\mu_{\lceil\frac{a + 1}{2}\rceil}, \dots, \mu_{\lceil\frac{b+1}{2}\rceil} \right)^t\qquad \hbox{ and } \qquad
\nu[a,b] = \left( \nu_{\lceil\frac{a}{2}\rceil}, \dots, \nu_{\lceil\frac{b}{2}\rceil}\right)^t.\]
\end{definition}

The following result provides our method for finding all viable $z$, which is not difficult to implement in practice.

\begin{prop}
 \label{prop:allviablez}
 Let $\mu$ and $\nu$ be as in Definition \ref{def:munu}.  If $z \in \IR^{n-1}$ is viable, then \[T(z)T(z)^t \mu = \mu\qquad \text{and} \qquad T(z)^t T(z) \nu = \nu.\]
\end{prop}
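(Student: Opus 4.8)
The plan is to exploit that, for viable $z$, $T(z)$ is honestly a block diagonal matrix $T(z) = \Theta_1 \oplus \cdots \oplus \Theta_r$ (Definition \ref{D:viable}(1)), where by Remark \ref{rem:viable}(2) each $\Theta_k$ equals $T_k(z)$ if $\alpha_k$ is odd and $T_k(z)^t$ if $\alpha_k$ is even. Then $T(z)T(z)^t$ is block diagonal with respect to the partition of the rows of $T(z)$ induced by $\Theta_1, \dots, \Theta_r$, its $k$-th block being $\Theta_k \Theta_k^t$. Partitioning the entries of $\mu$ accordingly into blocks $\mu^{(1)}, \dots, \mu^{(r)}$, where $\mu^{(k)}$ collects the entries of $\mu$ indexed by the rows of $\Theta_k$, the identity $T(z)T(z)^t\mu = \mu$ reduces to showing $\Theta_k\Theta_k^t\mu^{(k)} = \mu^{(k)}$ for each $k$; the identity $T(z)^tT(z)\nu = \nu$ reduces, symmetrically, to a column-indexed statement for $\nu$, so I will only discuss the first.

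The key claim is that $\mu^{(k)}$ is a positive scalar multiple of $\mu(f_k)$ when $\alpha_k$ is odd, and of $\nu(f_k)$ when $\alpha_k$ is even. Granting this, the proof finishes quickly: by Definition \ref{D:viable}(2), $T_k(z) y_k = x_k$ and $T_k(z)^t x_k = y_k$ with $x_k = \mu(f_k)/\|\mu(f_k)\|$ and $y_k = \nu(f_k)/\|\nu(f_k)\|$, whence $T_k(z)T_k(z)^t \mu(f_k) = \mu(f_k)$ and $T_k(z)^tT_k(z)\nu(f_k) = \nu(f_k)$. Since $\Theta_k\Theta_k^t$ equals $T_k(z)T_k(z)^t$ or $T_k(z)^tT_k(z)$ according to the parity of $\alpha_k$, and $\mu^{(k)}$ is a multiple of the vector fixed by that operator, we get $\Theta_k\Theta_k^t\mu^{(k)} = \mu^{(k)}$. (Note that no appeal to Perron--Frobenius is needed here, only the explicit fixed-vector relations from the definition of viability.)

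To prove the key claim I would compare ratios of consecutive entries. From Definition \ref{def:munu}, $\mu_{i+1}/\mu_i = d_{2i-1}/d_{2i}$, and since $(f_k)_m = d_{\alpha_k+m-1}$ one gets $\mu(f_k)_{l+1}/\mu(f_k)_l = d_{\alpha_k+2l-2}/d_{\alpha_k+2l-1}$ and $\nu(f_k)_{l+1}/\nu(f_k)_l = d_{\alpha_k+2l-1}/d_{\alpha_k+2l}$. Using Remark \ref{rem:dtomunu} together with viability --- which forces exactly one zero of $z$ between consecutive blocks, so $d_{\alpha_k-1}z_{\alpha_k-1}=0$ --- one identifies the first row of $T(z)$ lying in block $k$: it is row $(\alpha_k+1)/2$ if $\alpha_k$ is odd (the row in which $d_{\alpha_k}z_{\alpha_k}$ sits on the main diagonal of $T(z)$), and row $\alpha_k/2+1$ if $\alpha_k$ is even. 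Writing $r_0$ for this first row, we have $\mu^{(k)}_l = \mu_{r_0+l-1}$, hence $\mu^{(k)}_{l+1}/\mu^{(k)}_l = d_{2r_0+2l-3}/d_{2r_0+2l-2}$, which matches the ratios of $\mu(f_k)$ in the odd case and of $\nu(f_k)$ in the even case; since $\mu^{(k)}$ has all entries positive, it is therefore the claimed scalar multiple. The $\nu$-statement follows verbatim using the columns of $T(z)$ and $T(z)^tT(z)$.

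I expect the index bookkeeping of the third paragraph to be the only real difficulty: correctly locating the rows (and, for the $\nu$-statement, the columns) of $T(z)$ that make up $\Theta_k$ as a function of the parity of $\alpha_k$, checking that the length of $\mu^{(k)}$ (resp. $\nu^{(k)}$) matches the size of $\mu(f_k)$ (resp. $\nu(f_k)$), and handling degenerate $1 \times 1$ blocks $\Theta_k$. Everything else is a short direct-sum computation.
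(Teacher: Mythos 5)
Your proposal is correct and takes essentially the same route as the paper's own proof: both decompose $T(z)=\Theta_1\oplus\cdots\oplus\Theta_r$, identify the corresponding sub-blocks of $\mu$ and $\nu$ as positive scalar multiples of $\mu(f_k)$ or $\nu(f_k)$ according to the parity of $\alpha_k$, and then invoke the viability relations $T_k(z)y_k=x_k$, $T_k(z)^t x_k=y_k$ to get the per-block fixed-vector identities. Your ratio-of-consecutive-entries verification of the scalar-multiple claim is just a rephrasing of the paper's direct appeal to the recursive definitions of $\mu$ and $\nu$ via $\mu[\alpha_k,\beta_k]$ and $\nu[\alpha_k,\beta_k]$, so no substantive difference remains (only note that your notation $\mu^{(k)}$ collides with the paper's Definition of partial vectors).
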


Before proving Proposition \ref{prop:allviablez}, we present the following example, which illustrates the main ideas of the proof.

\begin{example}
Let $n = 10$.  We begin with a viable vector $z$.  The coordinates in which $z$ is $0$ determines the decomposition of $T(z)$ into $\Theta_k$.  Assume our viable vector (which has $n - 1 = 9$ entries) is

\[ z = (z_1, z_2, 0, z_4, z_5, 0, z_7, z_8, z_9).\]

Since there are two zeros, we have $T(z) = \Theta_1 \oplus \Theta_2 \oplus \Theta_3$.  Indeed, 

\[ T(z) = \begin{bmatrix}
           d_1z_1 & 0      & 0      & 0      & 0\\
           d_2z_2 & 0      & 0      & 0      & 0\\
           0      & d_4z_4 & d_5z_5 & 0      & 0\\
           0      & 0      & 0      & d_7z_7 & 0\\
           0      & 0      & 0      & d_8z_8 & d_9z_9\\
          \end{bmatrix},
\]
 
and we have $\Theta_1 = T_1(z)$, where
\[ T_1(z) = \begin{bmatrix}
             d_1z_1 \\
             d_2z_2
            \end{bmatrix},
\]
$\Theta_2 = T_2(z)^t$, where
\[ T_2(z) = \begin{bmatrix}
             d_4z_4 \\
             d_5z_5
            \end{bmatrix},
\]
$\Theta_3 = T_3(z)$, where 
\[ T_3(z) = \begin{bmatrix}
             d_7z_7 & 0 \\
             d_8z_8 & d_9z_9
            \end{bmatrix}.
\]
Note that, by Definition \ref{def:viablesetup}, we have $\alpha_1 = 1$, $\beta_1 = 2$, $\alpha_2 = 4$, $\beta_2 = 5$, $\alpha_3 = 7$, $\beta_3 = 9$, and so $f_1 = (d_1, d_2)$, $f_2 = (d_4, d_5)$, $f_3 = (d_7, d_8, d_9)$.
In each case, $x_k = \mu(f_k)/\|\mu(f_k)\|$, $y_k = \nu(f_k)/\|\nu(f_k)\|$, and, by definition of viable, $T_k(z)y_k = x_k$, $T_k(z)^t x_k = y_k$;  that is, $T_k(z)T_k(z)^t x_k = x_k$, $T_k(z)^tT_k(z) y_k = y_k$, and hence $T_k(z)T_k(z)^t \mu(f_k) = \mu(f_k)$, $T_k(z)^tT_k(z) \nu(f_k) = \nu(f_k)$.

Now, by definition, we have 
\[ \mu(f_1) = \left(1, \frac{d_1}{d_2}\right)^t, \quad
\mu(f_2) = \left(1, \frac{d_4}{d_5}\right)^t, \quad  
\mu(f_3) = \left(1, \frac{d_7}{d_8}\right)^t,
\]
\[ \nu(f_1) = (1), \qquad \nu(f_2) = (1), \qquad  
\nu(f_3) = \left(1, \frac{d_8}{d_9}\right)^t. 
\]

Recall from Definition \ref{def:truncmunu} that 
\[\mu[\alpha_k, \beta_k] \colonequals \left(\mu_{\left\lceil\frac{\alpha_k + 1}{2}\right\rceil}, \dots, \mu_{\left\lceil\frac{\beta_k + 1}{2}\right\rceil}\right)^t,\] 
\[\nu[\alpha_k, \beta_k] \colonequals \left(\nu_{\left\lceil \frac{\alpha_k}{2} \right\rceil}, \dots, \nu_{\left\lceil \frac{\beta_k}{2} \right\rceil} \right)^t.\]

Noting that 
\[ \mu = (\mu_1, \mu_2, \mu_3, \mu_4, \mu_5)^t = \left(1, \frac{d_1}{d_2}, \frac{d_1d_3}{d_2d_4}, \frac{d_1d_3d_5}{d_2d_4d_6}, \frac{d_1d_3d_5d_7}{d_2d_4d_6d_8} \right)^t,\]
\[ \nu = (\nu_1, \nu_2, \nu_3, \nu_4, \nu_5)^t = \left(1, \frac{d_2}{d_3}, \frac{d_2d_4}{d_3d_5}, \frac{d_2d_4d_6}{d_3d_5d_7}, \frac{d_2d_4d_6d_8}{d_3d_5d_7d_9} \right)^t,\]
we have
\[ \mu[\alpha_1, \beta_1] = (\mu_1, \mu_2)^t = \mu_1 \cdot \mu(f_1),\]
\[ \mu[\alpha_2, \beta_2] = (\mu_3) = \mu_3 \cdot \nu(f_2),\]
\[ \mu[\alpha_3, \beta_3] = (\mu_4, \mu_5)^t = \mu_4 \cdot \mu(f_3),\]
and further
\[ \nu[\alpha_1, \beta_1] = (\nu_1) = \nu_1 \cdot \nu(f_1),\]
\[ \nu[\alpha_2, \beta_2] = (\nu_2, \nu_3)^t = \nu_2 \cdot \mu(f_2),\]
\[ \nu[\alpha_3, \beta_3] = (\nu_4, \nu_5)^t = \nu_4 \cdot \nu(f_3).\]

This shows that, in each case,
\[ \Theta_k \Theta_k^t \mu[\alpha_k, \beta_k] = \mu[\alpha_k, \beta_k],\]
\[ \Theta_k^t \Theta_k \nu[\alpha_k, \beta_k] = \nu[\alpha_k, \beta_k],\]
and hence 
\[ T(z)T(z)^t \mu = \begin{bmatrix}
\Theta_1 \Theta_1^t \mu[\alpha_1, \beta_1] \\
\Theta_2 \Theta_2^t \mu[\alpha_2, \beta_2] \\
\Theta_3 \Theta_3^t \mu[\alpha_3, \beta_3] \\
\end{bmatrix}
= \begin{bmatrix}
\mu[\alpha_1, \beta_1] \\
\mu[\alpha_2, \beta_2] \\
\mu[\alpha_3, \beta_3] \\
\end{bmatrix}
= \mu,\] 
\[ T(z)^tT(z) \nu = \begin{bmatrix}
\Theta_1^t \Theta_1 \nu[\alpha_1, \beta_1] \\
\Theta_2^t \Theta_2 \nu[\alpha_2, \beta_2] \\
\Theta_3^t \Theta_3 \nu[\alpha_3, \beta_3] \\
\end{bmatrix}
= \begin{bmatrix}
\nu[\alpha_1, \beta_1] \\
\nu[\alpha_2, \beta_2] \\
\nu[\alpha_3, \beta_3] \\
\end{bmatrix}= \nu.\]
\end{example}

\begin{proof}[Proof of Proposition \ref{prop:allviablez}]
 Let $z$ be viable with corresponding decomposition $T(z) = \Theta_1 \oplus \cdots \oplus \Theta_r$.  By Remark \ref{rem:viable}(2), $\Theta_k = T_k(z)$ if $\alpha_k$ is odd and $\Theta_k = T_k(z)^t$ if $\alpha_k$ is even.
 
 Recalling Definition \ref{def:truncmunu}, we have 
  \[\mu[\alpha_k, \beta_k] = \left(\mu_{\left\lceil\frac{\alpha_k + 1}{2}\right\rceil}, \dots, \mu_{\left\lceil\frac{\beta_k + 1}{2}\right\rceil}\right)^t,\]
  \[\nu[\alpha_k, \beta_k] = \left(\nu_{\left\lceil \frac{\alpha_k}{2} \right\rceil}, \dots, \nu_{\left\lceil \frac{\beta_k}{2} \right\rceil} \right)^t.\]
  Note that $\mu[\alpha_k, \beta_k]$ and $\nu[\alpha_k, \beta_k]$ partition $\mu$ and $\nu$ respectively to conform with the decomposition of $T(z) = \Theta_1 \oplus \cdots \oplus \Theta_r$ so that
 $$T(z)^t \mu = \begin{bmatrix} \Theta_1^t \mu[\alpha_1, \beta_1] \\ \vdots \\ \Theta_r^t \mu[\alpha_r, \beta_r] \end{bmatrix} \ \text{ and } \; \ T(z) \nu = \begin{bmatrix} \Theta_1 \nu[\alpha_1, \beta_1] \\ \vdots \\ \Theta_r \nu[\alpha_r, \beta_r] \end{bmatrix}.$$  
 It hence suffices to prove for each $k$ that $\mu[\alpha_k, \beta_k]$ is in the $1$-eigenspace of $\Theta_k \Theta_k^t$ and $\nu[\alpha_k, \beta_k]$ is in the $1$-eigenspace of $\Theta_k^t \Theta_k.$  
 
 By Definition \ref{D:viable}, we have $T_k(z)y_k = x_k$ and $T_k(z)^t x_k = y_k$, where $x_k = \mu(f_k)/\|\mu(f_k)\|$ and $y_k = \nu(f_k)/\|\nu(f_k)\|$.  This implies
 \[ T_k(z)T_k(z)^t x_k = x_k, \qquad T_k(z)^tT_k(z) y_k = y_k\]
 and hence
 \[ T_k(z)T_k(z)^t \mu(f_k) = \mu(f_k), \qquad T_k(z)^tT_k(z) \nu(f_k) = \nu(f_k).\]  
 Recall that $\Theta_k = T_k(z)$ if $\alpha_k$ is odd and $\Theta_k = T_k(z)^t$ if $\alpha_k$ is even.
 
 Consider first the case when $\alpha_k$ is odd and $\Theta_k = T_k(z)$.   We have 
 \[ \mu(f_k) = \left(1, \frac{d_{\alpha_k}}{d_{\alpha_k + 1}}, \dots \right)^t,
 \qquad \nu(f_k) = \left(1, \frac{d_{\alpha_k + 1}}{d_{\alpha_k + 2}}, \dots \right)^t,\]
 and, since $\alpha_k$ is odd, by the recursive definitions of $\mu$  and $\nu$, we have that 
 \[ \mu[\alpha_k, \beta_k] = \left(\mu_{\frac{\alpha_k + 1}{2}}, \dots, \mu_{\left\lceil\frac{\beta_k + 1}{2}\right\rceil}\right)^t = \mu_{\frac{\alpha_k + 1}{2}} \cdot \mu(f_k),\]
 \[\nu[\alpha_k, \beta_k] = \left(\nu_{\frac{\alpha_k +1}{2}}, \dots, \nu_{\left\lceil \frac{\beta_k}{2} \right\rceil}\right)^t = \nu_{\frac{\alpha_k + 1}{2}} \cdot \nu(f_k),\]
and hence $\mu[\alpha_k, \beta_k]$ is in the $1$-eigenspace for $\Theta_k \Theta_k^t = T_k(z)T_k(z)^t$ and $\nu[\alpha_k,\beta_k]$ is in the $1$-eigenspace for $\Theta_k^t \Theta_k = T_k(z)^t T_k(z)$.  

 Next, consider the case when $\alpha_k$ is even and $\Theta_k = T_k(z)^t$.  Since 
 \[ \nu(f_k) = \left(1, \frac{d_{\alpha_k + 1}}{d_{\alpha_k + 2}}, \dots \right)^t,\qquad  
 \mu(f_k) = \left(1, \frac{d_{\alpha_k}}{d_{\alpha_k + 1}}, \dots \right)^t,\]
 and $\alpha_k$ is even we have
 \[ \mu[\alpha_k, \beta_k] = \left(\mu_{\frac{\alpha_k}{2} + 1}, \dots, \mu_{\left\lceil\frac{\beta_k + 1}{2}\right\rceil}\right)^t = \mu_{\frac{\alpha_k}{2} + 1} \cdot \nu(f_k),\] \[\nu[\alpha_k, \beta_k] = \left(\nu_{\frac{\alpha_k}{2}}, \dots, \nu_{\left\lceil \frac{\beta_k}{2} \right\rceil}\right)^t = \nu_{\frac{\alpha_k}{2}} \cdot \mu(f_k).\]  Thus, when $\alpha_k$ is even, $\mu[\alpha_k, \beta_k]$ is in the $1$-eigenspace for $\Theta_k \Theta_k^t = T_k(z)^tT_k(z)$ and $\nu[\alpha_k,\beta_k]$ is in the $1$-eigenspace for $\Theta_k^t \Theta_k = T_k(z)T_k(z)^t$.  Thus, in any case, $\mu[\alpha_k, \beta_k]$ is in the $1$-eigenspace of $\Theta_k \Theta_k^t$ and $\nu[\alpha_k, \beta_k]$ is in the $1$-eigenspace of $\Theta_k^t \Theta_k$, completing the proof.
\end{proof}

\begin{remark}
 The entries of $T(z)^tT(z)$ and $T(z)T(z)^t$ are all sums of products of at most two $z_i$, where $z$ is treated as a variable for a fixed $d$.  Thus, the system of equations produced by Proposition \ref{prop:allviablez} is at most quadratic in the entries of $z$ and can be solved by, e.g., Mathematica \cite{Mathematica} relatively quickly.  (See Example \ref{ex:6} for such code.)
\end{remark}

Much like in the case when we can find a viable $z$ with all positive entries, there are cases when we can guarantee that certain viable $z$ are the best possible.

\begin{definition}
    \label{def:xy}
 Let $\mu$, $\nu$ be as in Definition \ref{def:munu}.  Define
 \[ x \colonequals \frac{\mu}{\|\mu\|}\qquad\text{and}\qquad y \colonequals \frac{\nu}{\|\nu\|}.\]
\end{definition}

\begin{prop}
 \label{prop:guaranteedbest}
 Let $x$ and $y$ be as in Definition \ref{def:xy}.  There exists a viable $z \in \IR^{n-1}$ such that $T(z)y = x$ and $T(z)^t x = y$ if and only if 
 \[ d^D(1,n) = \frac{\|\mu\| \|\nu\|}{d_1}.\]
\end{prop}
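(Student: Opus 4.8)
The plan is to reduce the proposition to a single \emph{a priori} inequality, namely $d^D(1,n) \le \|\mu\|\,\|\nu\|/d_1$, together with a characterization of when equality holds. For any $z \in \IR^{n-1}$ with $z_j \ge 0$ and $\|T(z)\| = 1$, combining Lemma \ref{lem:muTnu} with Cauchy--Schwarz gives
\[ d_1 \sum_{j=1}^{n-1} z_j = \mu^t T(z)\nu = \langle \mu, T(z)\nu \rangle \le \|\mu\|\,\|T(z)\nu\| \le \|\mu\|\,\|\nu\|\,\|T(z)\| = \|\mu\|\,\|\nu\|, \]
so Lemma \ref{L:equiv} yields the claimed bound. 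The crucial observation is that equality throughout this chain is precisely the Birkhoff--James condition in the statement: if $\|T(z)\| = 1$, then $\sum_j z_j = \|\mu\|\,\|\nu\|/d_1$ forces equality in both Cauchy--Schwarz steps, hence $T(z)\nu$ is a nonnegative multiple of $\mu$ and $\|T(z)\nu\| = \|\nu\|$, which together say $T(z)\nu = (\|\nu\|/\|\mu\|)\,\mu$, i.e.\ $T(z)y = x$. Moreover $T(z)y = x$ by itself implies $T(z)^t x = y$ when $\|T(z)\| = 1$: since $\|T(z)y\| = \|x\| = 1 = \|T(z)\|\,\|y\|$, the vector $y$ attains the norm of $T(z)$, so $T(z)^t T(z)y = y$. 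Conversely $T(z)y = x$ immediately gives $1 = x^t x = x^t T(z) y = \mu^t T(z)\nu/(\|\mu\|\,\|\nu\|) = d_1 \sum_j z_j/(\|\mu\|\,\|\nu\|)$, hence $\sum_j z_j = \|\mu\|\,\|\nu\|/d_1$.

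With this in hand, the forward direction is short: if $z$ is viable and satisfies $T(z)y = x$, $T(z)^t x = y$, then $\|T(z)\| = 1$ (true for any viable $z$, by the proof of Theorem \ref{thm:maxatviabledecomp}), so $z$ is feasible in Lemma \ref{L:equiv} and $d^D(1,n) \ge \sum_j z_j = \|\mu\|\,\|\nu\|/d_1$; together with the \emph{a priori} bound this forces equality. For the converse, suppose $d^D(1,n) = \|\mu\|\,\|\nu\|/d_1$. By Theorem \ref{thm:maxatviabledecomp} there is a viable $z$ with $\sum_k \|\mu(f_k)\|\,\|\nu(f_k)\|/d_{\alpha_k} = d^D(1,n)$. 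Applying Lemma \ref{lem:muTnu} to each block $T_k(z) = T(f_k,u_k)$ and using the viability identity $T_k(z)y_k = x_k$ (so that $x_k^t T_k(z)y_k = \|x_k\|^2 = 1$) shows $\sum_{j=\alpha_k}^{\beta_k} z_j = \|\mu(f_k)\|\,\|\nu(f_k)\|/d_{\alpha_k}$ for each $k$; summing over $k$ gives $\sum_{j=1}^{n-1} z_j = d^D(1,n) = \|\mu\|\,\|\nu\|/d_1$. Since $z$ is viable, $\|T(z)\| = 1$, so the equality analysis above yields $T(z)y = x$ and $T(z)^t x = y$, exhibiting the required vector.

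The only delicate part is getting the equality conditions in the Cauchy--Schwarz chain exactly right, together with the (perhaps unexpected) fact that once $\|T(z)\| = 1$ the equation $T(z)y = x$ already entails $T(z)^t x = y$, so the two displayed conditions collapse into one. Everything else is routine bookkeeping with Lemma \ref{lem:muTnu}, Lemma \ref{L:equiv} and Theorem \ref{thm:maxatviabledecomp}; in particular, Proposition \ref{prop:allviablez} is not needed for this argument. One should also dispatch the trivial degenerate subcase in which $\sum_j z_j = 0$ (impossible in the equality situation since $\|\mu\|,\|\nu\| \ge 1$), which guarantees that the scalar $\|\nu\|/\|\mu\|$ above is genuinely positive.
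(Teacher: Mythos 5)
Your proof is correct, and in its key steps it takes a genuinely different route from the paper. For the upper bound you observe that for any feasible $z$ (i.e.\ $z_j\ge 0$, $\|T(z)\|=1$) Lemma \ref{lem:muTnu} and Cauchy--Schwarz give $d_1\sum_j z_j=\mu^t T(z)\nu\le\|\mu\|\,\|T(z)\nu\|\le\|\mu\|\,\|\nu\|$, so $d^D(1,n)\le\|\mu\|\,\|\nu\|/d_1$ \emph{a priori}; the paper instead obtains this bound by combining Theorem \ref{thm:maxatviabledecomp} with repeated applications of Lemma \ref{lem:prerefine}. For the converse, after using Theorem \ref{thm:maxatviabledecomp} (plus the blockwise Lemma \ref{lem:muTnu} computation, which is fine) to produce a viable maximizer with $\sum_j z_j=\|\mu\|\,\|\nu\|/d_1$ and $\|T(z)\|=1$, you extract $T(z)y=x$ from the equality conditions in the Cauchy--Schwarz chain and then get $T(z)^t x=y$ from the standard fact that a unit vector attaining $\|T(z)\|$ is a top singular vector, i.e.\ $T(z)^tT(z)y=y$. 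The paper instead writes $T(z)y=x+x^\perp$, $T(z)^t x=y+y^\perp$ and kills the orthogonal parts by invoking Proposition \ref{prop:allviablez} ($T(z)^tT(z)\nu=\nu$). The net effect is that your argument is more elementary and self-contained: it bypasses both Lemma \ref{lem:prerefine} and Proposition \ref{prop:allviablez}, at the modest cost of an explicit equality analysis in Cauchy--Schwarz (including the correct handling of the nonzero-multiple and degenerate cases, which you do address), while the paper's version leans on machinery it has already built and will reuse elsewhere. Both proofs rely on Theorem \ref{thm:maxatviabledecomp} to know the maximum is attained at a viable $z$ and that $\|T(z)\|=1$ for viable $z$, so that dependence is unavoidable in either treatment.
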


\begin{proof}
 Assume first that such a viable $z$ exists.  By Lemma \ref{lem:muTnu}, we have
 \[ x^t T(z) y = \frac{d_1}{\|\mu\| \|\nu\|} \cdot \sum_{i=1}^{n-1} z_i,\]
 and, by hypothesis, 
 \[ x^t T(z) y = \left(T(z)^t x\right)^t y = y^t y = 1,\]
 which implies 
 \[ \sum_{i = 1}^{n-1} z_i = \frac{\|\mu\| \|\nu\|}{d_1}.\]
 By Lemma \ref{lem:prerefine} and Theorem \ref{thm:maxatviabledecomp}, this is the largest that $\sum_{i = 1}^{n-1} z_i$ can be.  
 
 Conversely, if $d^D(1,n) = \|\mu\| \|\nu\|/d_1$, then this maximum is attained at some viable $z$ by Theorem \ref{thm:maxatviabledecomp}.  Using Lemma \ref{lem:muTnu}, since we know
 \[ x^t T(z) y = \frac{d_1}{\|\mu\| \|\nu\|} \cdot \sum_{i=1}^{n-1} z_i = \frac{d_1}{\|\mu\| \|\nu\|} \cdot d^D(1,n) = 1,\]
 we have 
 \[x^t(T(z)y - x) = x^tT(z)y - x^t x = 0\]
 and, similarly,
 \[ y^t(T(z)^t x - y) = 0, \]
 since $x$ and $y$ are unit vectors.  Thus, there exist vectors $x^\perp$ and $y^\perp$ such that $x^t x^\perp = y^t y^\perp = 0$ and $T(z)y = x + x^\perp$, $T(z)^t x = y + y^\perp$.  Furthermore, since $x = \mu/ \|\mu\|$, $y = \nu/\|\nu\|$, and $z$ is viable, by Proposition \ref{prop:allviablez}, we have
 \[ y = T(z)^tT(z)y = T(z)^tx + T(z)^tx^\perp = y + y^\perp + T(z)^t x^\perp,\]
 so $T(z)^t x^\perp = -y^\perp$, and a similar calculation shows that $T(z) y^\perp = -x^\perp$. So,
 \[ \|y^\perp\|^2 = \left(y + y^\perp \right)^t y^\perp = \left(T(z)^t x \right)^t y^\perp = x^tT(z)y^\perp = x^t(-x^\perp ) = 0,\]
and so $y^\perp = 0$.  Moreover, $x^\perp = -T(z)y^\perp = 0$, and hence $T(z)y = x$ and $T(z)^t x = y$. 
 \end{proof}

Due to the recursive nature of the definitions of $\mu$ and $\nu$, it is possible to solve the linear system $T(z)y = x$ and $T(z)^t x = y$, and there is a unique such $z \in \IR^{n-1}$ satisfying these equations.  In order to state this solution, we introduce some notation.

\begin{definition}
 Following the notation of Definition \ref{def:munu}, for $1 \le k \le \lfloor \frac{n+1}{2} \rfloor$, define
 \label{def:partialmunu}
 \[ \mu^{(k)} \colonequals (\mu_1, \dots, \mu_k)^t,\]
 and, for $1 \le k \le n - \lfloor \frac{n+1}{2} \rfloor$, define
 \[ \nu^{(k)} \colonequals (\nu_1, \dots, \nu_k)^t.\]
\end{definition}

\begin{cor}
 \label{cor:guaranteedbest}
Let $D \in M_n(\IR)$ be a symmetric tridiagonal matrix with zeros on the diagonal and $d_1, \dots, d_{n-1} > 0$ on the superdiagonal.  Then, 
\[ d^D(1,n) = \frac{\|\mu\| \|\nu\|}{d_1}\]
if and only if the vector $z$ is viable, where 
\[z_1 = \frac{\|\nu\|}{d_1 \|\mu\|}, \]
and, for $i \ge 1$,
\[ z_{2i} = \frac{1}{d_1 \|\mu\| \|\nu\|}\left( \|\nu^{(i)}\|^2\|\mu\|^2 - \|\mu^{(i)}\|^2 \|\nu\|^2\right),\]
\[ z_{2i+1} = \frac{1}{d_1 \|\mu\| \|\nu\|}\left( \|\mu^{(i+1)}\|^2\|\nu\|^2 - \|\nu^{(i)}\|^2 \|\mu\|^2\right).\]
\end{cor}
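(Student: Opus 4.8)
The plan is to combine Proposition \ref{prop:guaranteedbest} with an explicit solution of the linear system $T(z)y = x$, $T(z)^t x = y$, where $x = \mu/\|\mu\|$ and $y = \nu/\|\nu\|$. By Proposition \ref{prop:guaranteedbest}, the identity $d^D(1,n) = \|\mu\|\|\nu\|/d_1$ holds if and only if there is a viable $z$ with $T(z)y = x$ and $T(z)^t x = y$; since $x$ and $y$ are fixed (depending only on $d$), and since a viable $z$ with this property is automatically the $z$ we want, it suffices to show that the system $T(z)y = x$, $T(z)^t x = y$ has a \emph{unique} solution $z \in \IR^{n-1}$ and that this solution is exactly the one given in the statement. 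Then ``$d^D(1,n) = \|\mu\|\|\nu\|/d_1$'' $\iff$ ``this particular $z$ is viable.'' So the real content is the routine-but-careful computation that the displayed formulas solve the system.

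First I would write out the system componentwise using the structure of $T(z)$ from \eqref{E:T(z)}. Recall $d_i z_i$ sits in row $\lceil (i+1)/2 \rceil$, column $\lceil i/2 \rceil$ of $T(z)$ (Remark \ref{rem:dtomunu}). The equation $T(z) y = x$ reads, row by row: for an odd index $2i-1$ (appearing alone in its row together with $d_{2i}z_{2i}$ when $2i \le n-1$), $d_{2i-1} z_{2i-1} y_i + d_{2i} z_{2i} y_{i+1} = x_i \|\mu\|^{-1}\cdots$ — more precisely, after clearing the normalizations, $d_{2i-1} z_{2i-1}\,\nu_i + d_{2i} z_{2i}\,\nu_{i+1} = \mu_i \cdot \|\nu\|/\|\mu\|$ (with the obvious truncation at the boundary when $n$ is small). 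Similarly $T(z)^t x = y$ gives, for each column $i$, $d_{2i-2} z_{2i-2}\,\mu_{i-1}\cdot(\text{something}) + d_{2i-1} z_{2i-1}\,\mu_i = \nu_i \cdot \|\mu\|/\|\nu\|$. Using the recursions $\mu_{k+1} = (d_{2k-1}/d_{2k})\mu_k$ and $\nu_{k+1} = (d_{2k}/d_{2k+1})\nu_k$ from Definition \ref{def:munu}, each product $d_j z_j$ times the appropriate $\mu$- or $\nu$-entry telescopes, and one gets a bidiagonal (two-term recurrence) linear system in the unknowns $d_j z_j$. I would solve it forward starting from $z_1$: the first equation of $T(z)^t x = y$ (its first column) isolates $d_1 z_1 \mu_1 = \nu_1 \|\mu\|/\|\nu\|$, i.e.\ $z_1 = \|\nu\|/(d_1\|\mu\|)$, matching the claim. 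Then alternately the rows of $T(z)y=x$ determine $z_{2i}$ from $z_{2i-1}$, and the columns of $T(z)^t x = y$ determine $z_{2i+1}$ from $z_{2i}$; an induction on $i$ shows the partial sums $\|\mu^{(i)}\|^2$ and $\|\nu^{(i)}\|^2$ accumulate exactly as in the stated closed forms for $z_{2i}$ and $z_{2i+1}$. The induction hypothesis I would carry is something like $\sum_{j\le 2i-1} d_j z_j \,\mu_{\lceil j/2\rceil}\cdot(\ldots)$ equals a telescoped expression in $\|\mu^{(i)}\|, \|\nu^{(i)}\|, \|\mu\|, \|\nu\|$; the bookkeeping is the only delicate part.

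After establishing that the displayed $z$ is the unique solution of $T(z)y=x$, $T(z)^t x = y$ (uniqueness is immediate because the system is triangular in the ordered unknowns $d_1z_1, d_2z_2,\dots$ once we fix the arrangement above), the corollary follows directly: Proposition \ref{prop:guaranteedbest} says $d^D(1,n) = \|\mu\|\|\nu\|/d_1$ iff \emph{some} viable $z$ satisfies that system, but by uniqueness that $z$ must be the displayed one, so the condition is precisely that this explicit $z$ is viable. I expect the main obstacle to be purely organizational: correctly tracking the ceiling/floor indexing (which $\mu_k$ or $\nu_k$ multiplies each $d_j z_j$, and where the recursion terminates depending on the parity of $n$), and verifying that the proposed $z_{2i}, z_{2i+1}$ genuinely telescope the two-term recurrences rather than, say, off-by-one versions of them. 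A clean way to manage this is to phrase the induction in terms of the "partial inner products" $\mu[\,1,j\,]^t \cdots$ already appearing in Definition \ref{def:truncmunu} and the proof of Proposition \ref{prop:allviablez}, so that the needed identities are substitution-and-telescope rather than fresh computations.
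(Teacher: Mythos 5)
Your proposal is correct and takes essentially the same route as the paper, whose proof likewise consists of invoking Proposition \ref{prop:guaranteedbest} and solving the linear system $T(z)y=x$, $T(z)^t x=y$ by forward substitution, noting its unique solution is the displayed $z$. One transcription slip to fix when you write out the components: $d_{2i-1}z_{2i-1}$ and $d_{2i}z_{2i}$ share a \emph{column} of $T(z)$ (not a row), so $z_1$ is isolated by the first row of $T(z)y=x$, i.e.\ $d_1 z_1 \nu_1 = \mu_1 \|\nu\|/\|\mu\|$ (the equation you wrote would give $z_1=\|\mu\|/(d_1\|\nu\|)$), and the roles of the row and column equations in the alternating recursion are swapped accordingly; this does not affect the strategy, the uniqueness argument, or the final formulas.
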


\begin{proof}
 This follows from Proposition \ref{prop:guaranteedbest} and solving the linear system $T(z)y = x$ and $T(z)^tx = y$ for $z$.
\end{proof}

\begin{remark}
The $z$ listed in Corollary \ref{cor:guaranteedbest} is not guaranteed to be viable; for example, if $\|\nu\| > \|\mu\|$, then $z_2 < 0$.  
\end{remark}

In the case all the weights on the edges of a path are equal, we can recover the result obtained in \cite{lattice_96_Atzmon}; see \cite[Proposition 3]{Besnard}.

\begin{corollary} \label{cor:standardpath} Suppose $d = (1, \dots, 1) \in \IR^{n-1}$ and $z$ is a viable vector such that the maximum is attained at $z$.  Then one of the following holds.
\begin{itemize}
    \item[(i)] $n = 2k$, $z_1+ \cdots+z_{n-1} = k$ is attained when
    $z_j = 1$ if $j$ is odd, and $z_j = 0$, otherwise.
\item[(i)] $n = 2k-1$, $z_1+ \cdots+z_{n-1} = \sqrt{k(k-1)}$ is attained when
\[z_{2j - 1} = j\sqrt{\frac{k-1}{k}} - (j-1)\sqrt{\frac{k}{k-1}}, \quad 
z_{2j} = j\left(\sqrt{\frac{k}{k-1}} - \sqrt{\frac{k-1}{k}}\right), \quad
 \quad 1 \le j \le \lfloor n/2 \rfloor.\]
\end{itemize}
\end{corollary}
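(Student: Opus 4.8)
The plan is to specialize Theorem~\ref{thm:maxatviabledecomp} and Corollary~\ref{cor:guaranteedbest} to the unit weight vector $d=(1,\dots,1)$ and then solve a short combinatorial optimization. First I would record the relevant data. By Definition~\ref{def:munu}, taking $d=(1,\dots,1)$ forces $\mu=\nu=(1,\dots,1)$, so for a sub-path on $p$ vertices (again with unit weights) the quantity $\|\mu\|\|\nu\|/d_1$ appearing in Proposition~\ref{P:irred} and Theorem~\ref{thm:maxatviabledecomp} equals $v(p):=\sqrt{\lfloor (p+1)/2\rfloor\lfloor p/2\rfloor}$, which is $p/2$ when $p$ is even and $\tfrac12\sqrt{p^2-1}$ when $p$ is odd. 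I would isolate two elementary inequalities: (a) $v(p)\le p/2$, with equality if and only if $p$ is even; and (b) for integers $p,q\ge 2$, $v(p)+v(q)\le v(p+q)$, with equality if and only if $p$ and $q$ are both even. Here (b) is exactly Lemma~\ref{lem:prerefine} for unit weights, and in both cases the equality analysis is a one-line computation after squaring. I would also note the bookkeeping fact that, for a viable $z$ whose blocks $T_1(z),\dots,T_r(z)$ come from sub-paths on $p_1,\dots,p_r$ vertices, one has $p_k\ge 2$ and $p_1+\dots+p_r=n$ (the $r-1$ interior zeros of $z$ are the overlaps), and hence by Theorem~\ref{thm:maxatviabledecomp} that $z_1+\dots+z_{n-1}=\sum_k v(p_k)$.

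For $n=2k$: combining the bookkeeping with (a) gives $z_1+\dots+z_{n-1}=\sum_k v(p_k)\le\tfrac12\sum_k p_k=k$ for every viable $z$, and equality forces every $p_k$ even. The bound is attained by the vector $z$ with $z_j=1$ for odd $j$ and $z_j=0$ for even $j$: its blocks are the $1\times 1$ matrices $[1]$, for which Definition~\ref{D:viable}(2) reads $1\cdot 1=1$, so $z$ is viable with $z_1+\dots+z_{n-1}=k$. For uniqueness, a viable $z$ attaining $k$ has all $p_k$ even; if some block had $p_k\ge 4$ vertices then, with $x_k=y_k=\mathbf 1/\sqrt{p_k/2}$, the first coordinate of $T_k(z)y_k=x_k$ forces $z_{\alpha_k}=1$, after which the first coordinate of $T_k(z)^t x_k=y_k$ forces $z_{\alpha_k+1}=0$, contradicting that a block is a run of nonzero entries. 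Hence every $p_k=2$, which pins $z$ down to the vector above.

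For $n=2k-1$: iterating (b) along any viable decomposition gives $z_1+\dots+z_{n-1}=\sum_k v(p_k)\le v(n)=\tfrac12\sqrt{n^2-1}=\sqrt{k(k-1)}$, and this is strict once $r\ge 2$, since equality throughout would (by the equality case of (b), applied step by step) force every $p_k$ even, impossible because $\sum_k p_k=n$ is odd. The bound is attained by the vector furnished by Corollary~\ref{cor:guaranteedbest}: substituting $\|\mu\|=\sqrt k$, $\|\nu\|=\sqrt{k-1}$, $\|\mu^{(i)}\|^2=\|\nu^{(i)}\|^2=i$ into its formulas yields $z_1=\sqrt{(k-1)/k}$, $z_{2i}=i/\sqrt{k(k-1)}$, and $z_{2i+1}=(k-1-i)/\sqrt{k(k-1)}$, which I would rewrite as $z_{2j-1}=j\sqrt{(k-1)/k}-(j-1)\sqrt{k/(k-1)}$ and $z_{2j}=j\bigl(\sqrt{k/(k-1)}-\sqrt{(k-1)/k}\bigr)$ and check to be positive for $1\le j\le\lfloor n/2\rfloor$; being positive, this $z$ is viable, so $d^D(1,n)=\sqrt{k(k-1)}$ by Corollary~\ref{cor:guaranteedbest}, and $z_1+\dots+z_{n-1}=\|\mu\|\|\nu\|/d_1=\sqrt{k(k-1)}$ by Proposition~\ref{P:irred}. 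Finally, any viable maximizer must have $r=1$ by the strictness just noted, hence no zero entries; and the unique $z$ satisfying $T(z)y=x$ and $T(z)^t x=y$ (uniqueness as recorded just before Corollary~\ref{cor:guaranteedbest}) is the vector just exhibited, giving the stated uniqueness.

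The step I expect to be the main obstacle is the uniqueness claim in the even case -- precisely, excluding a viable $z$ whose decomposition contains an even sub-path on $\ge 4$ vertices. Everything else is either the general machinery of Theorem~\ref{thm:maxatviabledecomp} and Corollary~\ref{cor:guaranteedbest} applied directly, or the elementary inequalities (a), (b); but that exclusion requires looking inside a single block and reading off from the first row and first column of the Birkhoff--James relations that such a block would be forced to contain a zero, which needs a little care to phrase cleanly.
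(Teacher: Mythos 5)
Your argument is correct, but it is considerably longer than, and in the even case genuinely different from, the paper's proof, which is a one-line verification: for $d=(1,\dots,1)$ one has $\mu=\nu=(1,\dots,1)$, and each displayed vector is checked to be viable and to satisfy $T(z)y=x$, $T(z)^t x = y$ with $x=\mu/\|\mu\|$, $y=\nu/\|\nu\|$ (in the even case $T(z)=I$), so Proposition \ref{prop:guaranteedbest} immediately gives $d^D(1,n)=\|\mu\|\|\nu\|/d_1$, which equals $k$ or $\sqrt{k(k-1)}$. Your odd case is essentially that same verification, routed through Corollary \ref{cor:guaranteedbest} and a positivity check; your even case instead derives the value from the bound $v(p)\le p/2$ and the unit-weight specialization of Lemma \ref{lem:prerefine} together with its equality analysis, using the identity $\sum_j z_j=\sum_k \|\mu(f_k)\|\|\nu(f_k)\|/d_{\alpha_k}$ for viable $z$ (Lemma \ref{lem:muTnu} applied blockwise, as in the proof of Theorem \ref{thm:maxatviabledecomp}); all of these steps check out, including the block-count bookkeeping $p_1+\dots+p_r=n$ with $p_k\ge 2$. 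What your route buys is strictly more than the paper proves: you establish uniqueness of the maximizing viable vector in both parities --- in the even case by the observation that an even block on at least four vertices would force $z_{\alpha_k}=1$ from the first row and then $z_{\alpha_k+1}=0$ from the first column of the relations $T_k(z)y_k=x_k$, $T_k(z)^tx_k=y_k$, contradicting that blocks are runs of nonzero entries, and in the odd case from strict superadditivity when $r\ge 2$ plus uniqueness of the solution of the linear system behind Corollary \ref{cor:guaranteedbest}. The paper's proof does not address uniqueness at all, so if you only need the statement as written, the direct appeal to Proposition \ref{prop:guaranteedbest} suffices and is much shorter; your extra work is sound and is exactly where you predicted the effort would lie.
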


\begin{proof}
In each case, the proposed $z$ satisfies the hypotheses of Proposition \ref{prop:guaranteedbest}. 
\end{proof}

\section{\texorpdfstring{Analysis for Paths of Length $\leq6$ and Outlook}{Analysis for Paths of Length at Most 6 and Outlook}}
\label{sect:analysis}

In this section, we include an analysis of paths with $n$ vertices, where $n \le 6$.  If $n \le 3$, we can always find a $z$ that satisfies the hypotheses of Proposition \ref{prop:guaranteedbest}, so we have the following.

\begin{corollary} 
\begin{itemize}
\item[{\rm (i)}]If $n = 2$,  then the maximum $d^D(1,2) = 1/d_{1}$ is attained at $z = 1/d_1$.
\item[{\rm (ii)}]If $n = 3$, then the maximum 
\[d^D(1,3) = \frac{1}{d_1}\sqrt{1+ \left(\frac{d_1}{d_{2}}\right)^2}\]
is attained at \[z = \frac{1}{d_1 \|\mu\| \|\nu\|}\left(\|\nu\|^2, \|\mu\|^2 - \|\nu\|^2 \right) = \frac{1}{d_1\sqrt{1 + \frac{d_1^2}{d_2^2}}}\left(1, \frac{d_1^2}{d_2^2}\right) = 
\frac{1}{\sqrt{d_{1}^2+d_{2}^2}}
\left(\frac{d_{2}}{d_{1}}, \frac{d_{1}}{d_{2}}\right).\]
\end{itemize}
\end{corollary}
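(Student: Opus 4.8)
The plan is to derive the corollary as a direct specialization of Proposition~\ref{prop:guaranteedbest} (equivalently, Corollary~\ref{cor:guaranteedbest}). In both cases the strategy is identical: for the path of length $n$, write down the vectors $\mu$ and $\nu$ from Definition~\ref{def:munu}, form the candidate $z$ prescribed by Corollary~\ref{cor:guaranteedbest}, and verify that this $z$ has \emph{all positive entries}. Once positivity is established, $T(z)$ is irreducible, so the decomposition of Definition~\ref{def:viablesetup} consists of a single block with $f_1 = d$; the viability conditions of Definition~\ref{D:viable} then collapse to exactly $T(z)y = x$ and $T(z)^t x = y$ with $x = \mu/\|\mu\|$, $y = \nu/\|\nu\|$, which hold by construction. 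Hence $z$ is viable, Proposition~\ref{prop:guaranteedbest} gives $d^D(1,n) = \|\mu\|\,\|\nu\|/d_1$, and this value is attained at $z$.

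For $n = 2$, I would note $m = \lfloor 3/2 \rfloor = 1$, so $\mu = (1) \in \IR^1$ and $\nu = (1) \in \IR^1$, whence $\|\mu\| = \|\nu\| = 1$ and $\|\mu\|\,\|\nu\|/d_1 = 1/d_1$. Corollary~\ref{cor:guaranteedbest} yields $z = z_1 = \|\nu\|/(d_1\|\mu\|) = 1/d_1 > 0$, and $T(z) = [\,d_1 z_1\,] = [1]$ trivially satisfies $T(z)y = x$, $T(z)^t x = y$ with $x = y = (1)$, giving part (i).

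For $n = 3$, I would note $m = \lfloor 4/2 \rfloor = 2$, so $\mu = (1, d_1/d_2)^t \in \IR^2$ and $\nu = (1) \in \IR^1$, whence $\|\mu\| = \sqrt{1 + (d_1/d_2)^2}$, $\|\nu\| = 1$, and $d^D(1,3) = \frac{1}{d_1}\sqrt{1 + (d_1/d_2)^2}$. Feeding $\mu^{(1)} = \nu^{(1)} = (1)$ into Corollary~\ref{cor:guaranteedbest} gives $z_1 = \|\nu\|/(d_1\|\mu\|)$ and $z_2 = (\|\mu\|^2 - \|\nu\|^2)/(d_1\|\mu\|\,\|\nu\|) = (d_1/d_2)^2/(d_1\|\mu\|)$, both positive; a one-line check confirms $T(z)\,(1) = (d_1 z_1, d_2 z_2)^t = \mu/\|\mu\| = x$ and $T(z)^t x = (d_1 z_1 + d_1 z_2)/\|\mu\| = \|\mu\|^2/\|\mu\|^2 = 1 = y$. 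Rewriting $z$ via $\|\mu\|^2 = (d_1^2 + d_2^2)/d_2^2$ produces the three displayed expressions, and verifying they coincide is elementary.

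I do not expect a genuine obstacle here: the statement is a corollary of the machinery already in place, and the only point requiring a remark is that, in contrast to the general caveat following Corollary~\ref{cor:guaranteedbest}, the candidate $z$ is automatically positive when $n \le 3$ — immediate for $n = 2$, and for $n = 3$ equivalent to $\|\mu\|^2 > 1 = \|\nu\|^2$, which holds because $d_1, d_2 > 0$. The remaining work is purely the routine algebra of matching the equivalent closed forms of $z$ in part (ii).
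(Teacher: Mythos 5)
Your proposal is correct and follows exactly the paper's route: the paper deduces this corollary by observing that for $n\le 3$ the candidate $z$ from Corollary \ref{cor:guaranteedbest} is automatically nonnegative, hence viable, so Proposition \ref{prop:guaranteedbest} gives $d^D(1,n)=\|\mu\|\,\|\nu\|/d_1$, which is precisely your argument. Your explicit verification of $T(z)y=x$, $T(z)^t x=y$ and the algebraic simplifications for $n=3$ are accurate and merely make the paper's one-line justification explicit.
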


When $n = 4$, the situation begins to grow more complicated; see \cite[Proposition 3]{Besnard}.

\begin{cor}
 \label{cor:4path}
 Let $D \in M_4(\IR)$ be a symmetric tridiagonal matrix with zeros on the diagonal and $d_1, d_2, d_3$ on the superdiagonal.  Then one of the following holds.
 \begin{itemize}
  \item[(a)] If $\|\nu \| > \|\mu \|$ (or, equivalently, if $d_2^2 > d_1 d_3$), then the maximum
  \[ d^D(1,4) = \frac{1}{d_1} + \frac{1}{d_3}\]
  is attained at $z = (1/d_1, 0, 1/d_3)$.
  \item[(b)] If $\|\mu\| \ge \|\nu\|$ (or, equivalently, $d_2^2 \le d_1 d_3$), then the maximum 
  \[ d^D(1,4) = \frac{\|\mu\| \|\nu\|}{d_1}= \frac{1}{d_1}\sqrt{1 +\left(\frac{d_1}{d_2} \right)^2} \sqrt{1 + \left(\frac{d_2}{d_3} \right)^2}\]
  is attained at 
  \[ z = \frac{1}{d_1 \|\mu\| \|\nu\|}\left(\|\nu\|^2,\|\mu\|^2 - \|\nu\|^2 ,\|\mu\|^2 \|\nu\|^2 - \|\mu\|^2 \right).\]
 \end{itemize}
\end{cor}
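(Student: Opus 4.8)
The plan is to specialize Theorem~\ref{thm:maxatviabledecomp} and Corollary~\ref{cor:guaranteedbest} to $n=4$ by enumerating all viable vectors $z\in\IR^{3}$. First I would record the relevant data: for $n=4$ we have $\lfloor\frac{n+1}{2}\rfloor=2$, so $\mu=(1,\,d_1/d_2)^t$ and $\nu=(1,\,d_2/d_3)^t$, hence $\|\mu\|^2=1+(d_1/d_2)^2$ and $\|\nu\|^2=1+(d_2/d_3)^2$; comparing these immediately gives $\|\nu\|>\|\mu\|\iff d_2^{2}>d_1d_3$, which justifies the parenthetical equivalences in the statement. By Definition~\ref{D:viable}(1), a viable $z=(z_1,z_2,z_3)$ satisfies $z_1\ne0$, $z_3\ne0$ and has no two consecutive zeros, so either $z$ is entrywise positive or $z_2=0$ with $z_1,z_3>0$.

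Next I would dispose of the two possibilities. If $z_2=0$, then in the notation of Definition~\ref{def:viablesetup} one has $f_1=(d_1)$, $f_2=(d_3)$, the blocks $T_1(z),T_2(z)$ are $1\times1$, and viability forces $d_1z_1=d_3z_3=1$; thus $z=(1/d_1,0,1/d_3)$ is the unique viable vector with a zero entry, and since $\mu(f_k)=\nu(f_k)=(1)$ its value in the formula of Theorem~\ref{thm:maxatviabledecomp} is $\tfrac{1}{d_1}+\tfrac{1}{d_3}$. If instead $z$ is entrywise positive, then $r=1$, $f_1=d$, and condition~(2) of Definition~\ref{D:viable} reduces to $T(z)y=x$ and $T(z)^tx=y$ with $x=\mu/\|\mu\|$, $y=\nu/\|\nu\|$; by the uniqueness of the solution to this linear system (the discussion preceding Corollary~\ref{cor:guaranteedbest}), the only candidate is the vector displayed in Corollary~\ref{cor:guaranteedbest}, which for $n=4$ simplifies, using $\mu^{(1)}=\nu^{(1)}=(1)$ and $\mu^{(2)}=\mu$, to
\[
 z=\frac{1}{d_1\|\mu\|\|\nu\|}\bigl(\|\nu\|^2,\ \|\mu\|^2-\|\nu\|^2,\ \|\mu\|^2\|\nu\|^2-\|\mu\|^2\bigr).
\]
Its first and last entries are positive, while the middle entry is $\ge0$ precisely when $\|\mu\|\ge\|\nu\|$, and equals $0$ at equality; hence a positive viable vector exists iff $\|\mu\|>\|\nu\|$, and at the boundary $\|\mu\|=\|\nu\|$ the above vector degenerates to $(1/d_1,0,1/d_3)$.

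Finally I would assemble the two cases. When $\|\mu\|\ge\|\nu\|$ (case (b)), the vector displayed above is viable (entrywise positive if $\|\mu\|>\|\nu\|$, equal to $(1/d_1,0,1/d_3)$ otherwise), so Corollary~\ref{cor:guaranteedbest} yields $d^D(1,4)=\|\mu\|\|\nu\|/d_1$, attained at that vector; substituting $\|\mu\|^2$ and $\|\nu\|^2$ gives the stated closed form, and Lemma~\ref{lem:prerefine} with $p=2$ confirms $\|\mu\|\|\nu\|/d_1\ge\tfrac{1}{d_1}+\tfrac{1}{d_3}$, consistent with the maximum being attained at the positive vector. When $\|\nu\|>\|\mu\|$ (case (a)), no positive viable vector exists by the previous paragraph, so the only viable vector is $(1/d_1,0,1/d_3)$ and Theorem~\ref{thm:maxatviabledecomp} gives $d^D(1,4)=\tfrac{1}{d_1}+\tfrac{1}{d_3}$, attained there. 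The only point that needs genuine care is the boundary case $\|\mu\|=\|\nu\|$, where one must check that the two families of candidates meet in a single viable vector so that the two formulas agree there; the rest is routine bookkeeping with the machinery of Section~\ref{sect:path}.
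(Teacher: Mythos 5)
Your argument is correct and follows essentially the same route as the paper's proof: you identify the vector of Corollary~\ref{cor:guaranteedbest} as the unique candidate with trivial decomposition, observe it is viable exactly when its middle entry is nonnegative (i.e.\ $\|\mu\|\ge\|\nu\|$, equivalently $d_2^2\le d_1d_3$), and note that the only other viable vector is $(1/d_1,0,1/d_3)$, so Theorem~\ref{thm:maxatviabledecomp} settles case (a). Your extra attention to the boundary case $\|\mu\|=\|\nu\|$ and the Lemma~\ref{lem:prerefine} consistency check are fine but not needed beyond what the paper does.
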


\begin{proof}
 By Corollary \ref{cor:guaranteedbest}, the $z$ listed in case (b) is the unique solution to the linear system  $T(z)y=x$ and $T(z)^t x=y$, and this $z$ is viable precisely when its entries are nonnegative.  Since $\|\nu\| > 1$, the third coordinate is always positive, and the second coordinate is nonnegative 
 precisely when $\|\mu\| \ge \|\nu\|$ (or, equivalently, when $d_2^2 \le d_1d_3$).
 
 In the event that $\|\mu\| < \|\nu\|$ (or, equivalently, $d_2^2 > d_1d_3$), there is no viable $z$ with the trivial decomposition by Proposition \ref{prop:guaranteedbest} and Corollary \ref{cor:guaranteedbest}.  The only possible way to obtain a viable $z$ with a nontrivial decomposition is with $d_2 = 0$, in which case the result follows by Theorem \ref{thm:maxatviabledecomp} (or Proposition \ref{prop:allviablez}).
\end{proof}

For example, if $n = 4$, $(d_{1}, d_{2}, d_{3}) = (3,2,1)$, then 
 the optimal $a_1-a_4$ should be $1/3 + 1$ by Corollary \ref{cor:4path}.  The value from (b), $(1/3)\sqrt{1+(3/2)^2}\sqrt{1+2^2}$, is larger than $1/3 + 1$,
but it is not attainable, since there is no viable $z$.

We can also deduce the following result when $n=5$. 

\begin{cor}
 \label{cor:5path}
 Let $D \in M_5(\IR)$ be a symmetric tridiagonal matrix with zeros on the diagonal and $d_1, d_2, d_3, d_4$ on the superdiagonal.  Then one of the following holds.
 \begin{itemize}
  \item[(a)] If $\|\nu\| > \|\mu\|$, then the maximum
  \[ d^D(1,5) = \frac{1}{d_1} + \frac{1}{d_3}\sqrt{1 + \left(\frac{d_3}{d_4}\right)^2}\] is attained at
  \[ z = \left(\frac{1}{d_1}, 0, \frac{d_4}{d_3\sqrt{d_3^2 + d_4^2}}, \frac{d_3}{d_4\sqrt{d_3^2 + d_4^2}} \right).\]
  \item[(b)] If $\|\mu\| > \sqrt{1 + \frac{d_1^2}{d_2^2}}\cdot \|\nu\|$, then the maximum
  \[ d^D(1,5) = \frac{1}{d_4} + \frac{1}{d_1} \sqrt{1 + \left(\frac{d_1}{d_2} \right)^2}\]
  is attained at
  \[ z = \left(\frac{d_2}{d_1\sqrt{d_1^2 + d_2^2}}, \frac{d_1}{d_2\sqrt{d_1^2 + d_2^2}}, 0, \frac{1}{d_4} \right).\]
  \item[(c)] If $\| \nu \| \le \| \mu \| \le \sqrt{1 + \frac{d_1^2}{d_2^2}}\cdot \|\nu\|$, then the maximum 
  \[ d^D(1,5) = \frac{\|\mu\| \|\nu\|}{d_1} = \frac{1}{d_1} \sqrt{1 + \left(\frac{d_1}{d_2} \right)^2 + \left(\frac{d_1d_3}{d_2d_4} \right)^2} \sqrt{1 + \left(\frac{d_2}{d_3}\right)^2}\]
  occurs at 
  \[ z = \frac{1}{d_1\|\mu\|\|\nu\|}\left(\|\nu\|^2,\|\mu\|^2 - \|\nu\|^2 ,\left(1 + \frac{d_1^2}{d_2^2} \right) \|\nu\|^2 - \|\mu\|^2,  \|\nu\|^2 \|\mu\|^2 - \left(1 + \frac{d_1^2}{d_2^2} \right)\|\nu\|^2\right).\]
 \end{itemize}
\end{cor}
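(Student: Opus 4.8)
The plan is to enumerate all viable vectors $z\in\IR^{4}$ and invoke Theorem \ref{thm:maxatviabledecomp}. For $n=5$ the matrix $T(z)$ is the $3\times2$ matrix with columns $(d_1z_1,d_2z_2,0)^t$ and $(0,d_3z_3,d_4z_4)^t$, and $\mu=(1,d_1/d_2,d_1d_3/(d_2d_4))^t$, $\nu=(1,d_2/d_3)^t$. By Definition \ref{D:viable}(1) a viable $z$ has no consecutive zeros and $z_1,z_4\neq0$, so exactly one of three zero‑patterns occurs: all entries positive; $z_2=0$; or $z_3=0$. For each pattern there is at most one viable $z$, since the equations in Definition \ref{D:viable}(2) pin down the nonzero entries (for the all‑positive pattern this is the uniqueness asserted in Corollary \ref{cor:guaranteedbest}). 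I would also record at the outset that the three cases (a), (b), (c) are mutually exclusive and exhaustive, because $\sqrt{1+d_1^2/d_2^2}>1$.

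Next I would treat the three patterns in turn. For the all‑positive pattern, Corollary \ref{cor:guaranteedbest} produces the unique candidate, namely the vector displayed in part (c), and tells us it is viable precisely when its coordinates are nonnegative; a direct computation gives $z_1,z_4>0$ always, $z_2\ge0\iff\|\mu\|\ge\|\nu\|$, and $z_3\ge0\iff\|\mu\|^2\le(1+d_1^2/d_2^2)\|\nu\|^2$, i.e.\ it is viable exactly in case (c), where Corollary \ref{cor:guaranteedbest} then yields $d^D(1,5)=\|\mu\|\|\nu\|/d_1$, which expands to the stated product. For the pattern $z_2=0$ one has $f_1=(d_1)$ and $f_2=(d_3,d_4)$; solving the defining equations gives the (manifestly nonnegative, hence always viable) vector in part (a), with objective value $V_a:=1/d_1+\tfrac{1}{d_3}\sqrt{1+(d_3/d_4)^2}$ (sum its entries, or use Theorem \ref{thm:maxatviabledecomp}). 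In the same way the pattern $z_3=0$ yields $f_1=(d_1,d_2)$, $f_2=(d_4)$, the always‑viable vector in part (b), with value $V_b:=1/d_4+\tfrac{1}{d_1}\sqrt{1+(d_1/d_2)^2}$.

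Now Theorem \ref{thm:maxatviabledecomp} says $d^D(1,5)$ is the largest value attached to a viable vector. In case (c) the all‑positive vector is viable and Lemma \ref{lem:prerefine} (applied with $p=2$ and with $p=3$) gives $\|\mu\|\|\nu\|/d_1\ge V_a$ and $\|\mu\|\|\nu\|/d_1\ge V_b$, so $d^D(1,5)=\|\mu\|\|\nu\|/d_1$. In cases (a) and (b) the all‑positive vector is not viable (its coordinate $z_2$, resp.\ $z_3$, is negative), so $d^D(1,5)=\max\{V_a,V_b\}$, and it remains to show $V_a\ge V_b$ in case (a) and $V_b\ge V_a$ in case (b).

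This comparison is the one step requiring computation, but it collapses to an elementary inequality. Scaling so $d_1=1$ and setting $a=(d_1/d_2)^2$, $b=(d_2/d_3)^2$, $c=(d_3/d_4)^2$, one has $V_a=1+\sqrt{ab(1+c)}$ and $V_b=\sqrt{abc}+\sqrt{1+a}$, while case (a) becomes $b>a(1+c)$ and case (b) becomes $b<ac/(1+a)$. Using $\sqrt{1+a}-1=a/(1+\sqrt{1+a})$ and $\sqrt{1+c}-\sqrt{c}=1/(\sqrt{1+c}+\sqrt{c})$, the inequality $V_b\ge V_a$ is equivalent to $\sqrt{a}\,(\sqrt{1+c}+\sqrt{c})\ge\sqrt{b}\,(1+\sqrt{1+a})$ and $V_a\ge V_b$ to the reverse; substituting $\sqrt{b}<\sqrt{ac/(1+a)}$ in case (b), resp.\ $\sqrt{b}>\sqrt{a(1+c)}$ in case (a), reduces the desired inequality in each case to $(1+a)(1+c)\ge c$, that is $1+a+ac\ge0$, which is immediate. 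I expect the real work to lie not here but in the bookkeeping — verifying that the explicit vectors in (a), (b), (c) really are the three viable candidates and that summing their entries produces the three displayed formulas.
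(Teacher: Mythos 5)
Your argument is correct and follows essentially the same route as the paper: enumerate the three possible zero patterns of a viable $z$, use Corollary \ref{cor:guaranteedbest} to see that the all-positive candidate is viable exactly under the inequalities of case (c), and otherwise take the larger of the two always-viable split candidates via Theorem \ref{thm:maxatviabledecomp}. The one place you go beyond the paper's terse proof is the explicit verification that $V_a\ge V_b$ when $\|\nu\|>\|\mu\|$ and $V_b\ge V_a$ when $\|\mu\|>\sqrt{1+d_1^2/d_2^2}\,\|\nu\|$ --- a comparison the paper leaves implicit, since Lemma \ref{lem:prerefine} only bounds both split values by $\|\mu\|\|\nu\|/d_1$ --- and your reduction of both inequalities to $(1+a)(1+c)\ge c$ is correct.
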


\begin{proof}
 The proof is analogous to that of Corollary \ref{cor:4path}: by Corollary \ref{cor:guaranteedbest}, if the vector $z$ listed in (c) is viable, then the maximum occurs there.  This vector is viable precisely when its entries are nonnegative, i.e., when 
 \[ \| \nu \| \le \| \mu \| \le \sqrt{1 + \frac{d_1^2}{d_2^2}}\cdot \|\nu\|.\]
 At most one of these inequalities can fail, corresponding to (a) and (b), respectively.
\end{proof}

Finally, we end with a brief discussion of the case when $n = 6$.  While the formula for the maximum distance will be one of the formulas from Theorem \ref{thm:maxatviabledecomp}, it is less clear in this instance which is best.  Indeed, when $n \ge 6$, it is possible that multiple coordinates from the $z$ obtained in Corollary \ref{cor:guaranteedbest} fail to be nonnegative simultaneously, making the situation far more complicated.  For example, when the second and fourth coordinates of the $z$ from Corollary \ref{cor:guaranteedbest} are negative, the maximum value of $d^D(1,6)$ could be given by multiple formulas (see the following example).  Instead, we show in the following example that all possible ``zero patterns'' for viable $z$ actually occur as the best viable $z$ in some instance; put differently, for every formula for $d^D(1,6)$ that is a possibility in Theorem \ref{thm:maxatviabledecomp}, there is a $D$ for which that formula holds.

\begin{example}
\label{ex:6}
The interested reader can verify the examples below with the following Mathematica code to calculate $d^D(1, 6)$ once $d_1, d_2, d_3, d_4$, and $d_5$ are given.  Note that the sixth command (which begins with ``FullSimplify'') attempts to find the best possible solution by looking for a viable $z$ satisfying the hypotheses of Proposition \ref{prop:guaranteedbest}.  The final three commands utilize Proposition \ref{prop:allviablez} to find all viable $z$ and simply chooses the best one.

\begin{verbatim}
 mu = {{1}, {d1/d2}, {d1*d3/(d2*d4)}}
 
 x = mu/Norm[mu]
 
 nu = {{1}, {d2/d3}, {(d2*d4)/(d3*d5)}}
 
 y = nu/Norm[nu]
 
 Tz = {{d1*z1, 0, 0}, {d2*z2, d3*z3, 0}, {0, d4*z4, d5*z5}}
 
 FullSimplify[
 Solve[Transpose[Tz] . x == y && Tz . y == x  && 
   z1 \[GreaterSlantEqual] 0 && z2 \[GreaterSlantEqual] 0 && 
   z3 \[GreaterSlantEqual] 0 && z4 \[GreaterSlantEqual] 0 && 
   z5 \[GreaterSlantEqual] 0, {z1, z2, z3, z4, z5}]]
    
 l = Solve[
  Tz . Transpose[Tz] . mu == mu && Transpose[Tz] . Tz . nu == nu  && 
   z1 \[GreaterSlantEqual] 0 && z2 \[GreaterSlantEqual] 0 && 
   z3 \[GreaterSlantEqual] 0 && z4 \[GreaterSlantEqual] 0 && 
   z5 \[GreaterSlantEqual] 0, {z1, z2, z3, z4, z5}]
       
       
 Total[{z1, z2, z3, z4, z5} /. Part[l, #]] & /@  Range[Length[l]]

 Max[Total[{z1, z2, z3, z4, z5} /. Part[l, #]] & /@  Range[Length[l]]]
\end{verbatim}

 \begin{itemize}
  \item[(a)] When $d = (2, 1, 1, 1, 1)$, the maximum
  \[ d^D(1,6) = \frac{\|\mu\| \|\nu\|}{d_1} = \frac{1}{d_1} \sqrt{1 + \left(\frac{d_1}{d_2} \right)^2 + \left(\frac{d_1d_3}{d_2d_4} \right)^2} \sqrt{1 + \left(\frac{d_2}{d_3}\right)^2 + \left(\frac{d_2d_4}{d_3d_5} \right)^2} = \frac{3\sqrt{3}}{2}\]
  is attained at
  \[ z = \left(\frac{1}{2\sqrt{3}}, \frac{1}{\sqrt{3}}, \frac{1}{\sqrt{3}}, \frac{1}{2\sqrt{3}}, \frac{\sqrt{3}}{2} \right),\]
  which is the solution to $T(z)y = x$ and $T(z)^t x = y$ from Corollary \ref{cor:guaranteedbest}.
  \item[(b)] When $d = (1, 2, 1, 2, 1)$, the maximum
  \[ d^D(1,6) = \frac{1}{d_1} + \frac{1}{d_3} + \frac{1}{d_5} = 3\]
  is attained at 
  \[ z = (1, 0, 1, 0, 1).\]
  Note that the solution to $T(z)y = x$ and $T(z)^tx = y$ from Corollary \ref{cor:guaranteedbest} in this instance is 
  \[ z = \left(4, -\frac{15}{4}, \frac{19}{4}, -\frac{15}{4}, 4 \right),\]
  which is negative in the second and fourth coordinates.
  \item[(c)] When $d = (3, 20, 100, 10, 1000)$, the maximum
  \[ d^D(1,6) = \frac{1}{d_1}\sqrt{1 + \left(\frac{d_1}{d_2}\right)^2} + \frac{1}{d_4}\sqrt{1 + \left( \frac{d_4}{d_5}\right)^2} = \frac{\sqrt{409}}{60} + \frac{\sqrt{10001}}{1000}\]
  is attained at 
  \[ z = \left(\frac{20}{3\sqrt{409}}, \frac{3}{20\sqrt{409}}, 0, \frac{10}{\sqrt{10001}}, \frac{1}{1000\sqrt{10001}} \right).\]
  Note that the solution to $T(z)y = x$ and $T(z)^tx = y$ from Corollary \ref{cor:guaranteedbest} in this instance is 
  \[ z = \left(\frac{\sqrt{\frac{4127}{187}}}{25}, \frac{79732}{225\sqrt{771749}}, -\frac{31558513}{90000\sqrt{771749}}, \frac{33428513}{90000\sqrt{771749}}, \frac{\sqrt{\frac{187}{4127}}}{90000} \right),\]
  which is negative in the third coordinate.
  \item[(d)] When $d = (1, 3, 2, 1, 1)$, the maximum
  \[ d^D(1,6) = \frac{1}{d_1} +  \frac{1}{d_3}\sqrt{1 +\left(\frac{d_3}{d_4} \right)^2} \sqrt{1 + \left(\frac{d_4}{d_5} \right)^2} = 1 + \sqrt{\frac{5}{2}}\] is attained at
  \[ z = \left(1, 0, \frac{1}{\sqrt{10}}, \frac{3}{2\sqrt{10}}, \frac{\sqrt{\frac{5}{2}}}{2} \right).\]
  Note that the solution to $T(z)y = x$ and $T(z)^tx = y$ from Corollary \ref{cor:guaranteedbest} in this instance is 
  \[ z = \left(\frac{3 \sqrt{\frac{11}{7}}}{2}, -\frac{71}{6\sqrt{77}}, \frac{41}{3\sqrt{77}}, -\frac{19}{6\sqrt{77}}, \frac{3\sqrt{\frac{7}{11}}}{2} \right),\]
  which is negative in the second and fourth coordinates.
  \item[(e)] When $d = (1, 1, 2, 3, 1)$, the maximum
  \[ d^D(1,6) = \frac{1}{d_1}\sqrt{1 +\left(\frac{d_1}{d_2} \right)^2} \sqrt{1 + \left(\frac{d_2}{d_3} \right)^2} + \frac{1}{d_5} = 1 + \sqrt{\frac{5}{2}}\] is attained at
  \[ z = \left(\frac{\sqrt{\frac{5}{2}}}{2}, \frac{3}{2\sqrt{10}}, \frac{1}{\sqrt{10}}, 0 ,1 \right).\]
  Note that the solution to $T(z)y = x$ and $T(z)^tx = y$ from Corollary \ref{cor:guaranteedbest} in this instance is 
  \[ z = \left(\frac{3 \sqrt{\frac{7}{11}}}{2}, -\frac{19}{6\sqrt{77}}, \frac{41}{3\sqrt{77}}, -\frac{71}{6\sqrt{77}}, \frac{3\sqrt{\frac{11}{7}}}{2} \right),\]
  which is negative in the second and fourth coordinates.
 \end{itemize}
\end{example}

\begin{remark}
 It seems clear that the number of truly distinct subcases one must check increases in conjunction with the number of partitions of $n$.  Hence, the problem becomes computationally intractable very quickly without further simplifying assumptions. 
\end{remark}

\medskip\noindent
{\bf \large Acknowledgment}

Li is an affiliate member of the Institute for Quantum Computing, University of Waterloo.
His research was partially supported by the Simons Foundation Grant  851334.

\bibliographystyle{amsalpha}
\bibliography{NCDistance.bib}

\noindent
{\bf \large Addresses:}

\smallskip\noindent
(Clare, Li, Swartz) Department of Mathematics, College of William \& Mary, 
Williamsburg, VA 23185, USA. E-mail: prclare@wm.edu, easwartz@wm.edu,  ckli@math.wm.edu.

\smallskip\noindent
(Poon) Department of Mathematics, Embry-Riddle Aeronautical University, Prescott AZ 86301, USA.
E-mail: poon3de@erau.edu

\end{document}